\makeatletter \@addtoreset{equation}{section}
\newtheorem{thm}{Theorem}[section]
\newtheorem{corr}[thm]{Corollary}
\newtheorem{lem}[thm]{Lemma}
\theoremstyle{definition}
\theoremstyle{remark}
\newtheorem{rem}{Remark}[section]
\numberwithin{equation}{section}
\begin{document}
\begin{sloppypar}
\title{{Hopf's lemma for parabolic equations involving a generalized tempered fractional $p$-Laplacian}
\footnotetext{{Keywords: Hopf's lemma, a generalized tempered fractional $p$-Laplacian, parabolic fractional Laplacian, maximum principle.}}}

\author{\small  \it Linlin Fan, Linfen Cao and Peibiao Zhao\footnote{Corresponding author: Peibiao Zhao}
 }

\date{}
\maketitle

\begin{abstract}
In this paper, we study a nonlinear system involving a generalized tempered fractional $p$-Laplacian in $B_{1}(0)$:
\begin{equation*}
\left\{
\begin{array}{ll}
\partial_tu(x,t)+(-\Delta-\lambda_{f})_{p}^{s}u(x,t)=g(t,u(x,t)),
&(x,t)\in B_{1}(0)\times[0,+\infty),\\
u(x)=0,&(x,t)\in B_{1}^{c}(0)\times[0,+\infty),
\end{array}
\right.
\end{equation*}
where $0<s<1$, $p>2,\ n\geq2$. We establish Hopf's lemma for parabolic equations involving a generalized tempered fractional $p$-Laplacian. Hopf's lemma will become powerful tools in obtaining qualitative properties of solutions for nonlocal parabolic equations.
\end{abstract}
\section{Introduction}
\ \ \ \ \ \ Our main concern in this paper is to study the following nonlinear system involving the generalized tempered fractional $p$-Laplacian in $B_{1}(0)$:
\begin{equation}\label{eq:1}
\left\{
\begin{array}{ll}
\partial_tu(x,t)+(-\Delta-\lambda_{f})_{p}^{s}u(x,t)=g(t,u(x,t)),
&(x,t)\in B_{1}(0)\times[0,+\infty),\\
u(x)=0,&(x,t)\in B_{1}^{c}(0)\times[0,+\infty),
\end{array}
\right.
\end{equation}
where $0<s<1$, $p>2,\ n\geq2$.
Here, $(-\Delta-\lambda_{f})_{p}^{s}$ is a kind of tempered fractional
$p$-Laplacian defined by
\begin{equation}\label{eq:10}
(-\Delta-\lambda_{f})_{p}^{s}u(x)=C_{n,sp}\textit{PV}\int_{\mathbb{R}^{n}}\frac{|u(x)-u(y)|^{p-2}(u(x)-u(y))}{e^{\lambda f(|x-y|)}|x-y|^{n+sp}}dy,
\end{equation}
where $C_{n,sp}$ is a normalization positive constant depending only on $n$, $s$, $p$. $\lambda$ is a sufficiently small positive constant and $f$ is a nondecreasing map with respect to $|x-y|$.

For simplicity, let $G(t)=|t|^{p-2}t$ and then$$(-\Delta-\lambda_{f})_{p}^{s}u(x)=C_{n,sp}\textit{PV}\int_{\mathbb{R}^{n}}\frac{G(u(x)-u(y))}{e^{\lambda f(|x-y|)}|x-y|^{n+sp}}dy.$$ Obviously, $G(t)$ is a strictly increasing function since $G'(t)=(p-1)|t|^{p-2}\geq0$.

Multiplying fractional operators by an exponential factor leads to a tempered fractional operator. Tempered fractional derivatives, in fractional diffusion equation, govern the limits of random walk models with an exponentially tempered power-law jump distribution. Indeed, it describes the transition between standard and anomalous diffusion(see e.g. \cite{SWB2020}). More applications for the tempered fractional derivatives and tempered differential equations can be found, for instance, in poroelasticity, finance, ground water hydrology, and geophysical flows. Tempered fractional calculus can be recognized as the generalization of fractional calculus.

As a special case, when $p=2$ and $f$ is an identity map, the generalized nonlinear tempered fractional $p$-Laplacian becomes the tempered fractional Laplacian $(\Delta+\lambda)^{\frac{\beta}{2}}$,  physically introduced and mathematically defined in \cite{DLTZ2018} as
$$(\Delta+\lambda)^{\frac{\beta}{2}}u(x)=-C_{n,\beta,\lambda}\textit{PV}\int_{\mathbb{R}^{n}}\frac{u(x)-u(y)}{e^{\lambda |x-y|}|x-y|^{n+\beta}}dy,$$
where $\lambda$ is a sufficiently small positive constant, $C_{n,\beta,\lambda}=\frac{\Gamma(\frac{n}{2})}{2\pi^{\frac{n}{2}}|\Gamma(-\beta)|}(\beta\in(0,2))$ and $\Gamma(t)=\int_{0}^{\infty}s^{t-1}e^{-s}ds$ in the Gamma function(see e.g.\cite{DLTZ2018,ZHAW2021}). In 1996, Bertoin \cite{JBertoin1996} interpreted the fractional Laplacian as an infinitesimal generator for a stable L\'{e}vy diffusion process. The scaling limit of L\'{e}vy flight is the $\beta-$stable L\'{e}vy process, generated by the fractional Laplacian $(\Delta)^{\frac{\beta}{2}}$. In order to make the L\'{e}vy flight a more suitable physical model, the concept of the tempered L\'{e}vy flight was introduced. The scaling limit of the tempered L\'{e}vy flight is called the tempered L\'{e}vy process, which is generated by the tempered fractional Laplacian $(\Delta+\lambda)^{\frac{\beta}{2}}$.

In \cite{ZHAW2021}, the authors first introduced a new kind of tempered
fractional $p$-Laplacian $(-\Delta-\lambda_{f})_{p}^{s}$ based on tempered fractional Laplacian $(\Delta+\lambda)^{\frac{\beta}{2}}$, which was originally defined in \cite{DLTZ2018} by Deng et.al. In \cite{ZHAW2021}, the authors investigated radial symmetry and monotonicity of positive solutions to a logarithmic Choquard equation involving a generalized nonlinear tempered fractional $p$-Laplacian operator by applying the direct method of moving planes
\begin{equation}
\left\{
\begin{array}{ll}
(-\Delta-\lambda_{f})_{p}^{s}m(x)
\\ \quad=C_{n,t}(|x|^{2t-n}\ast[\ln(m(x)+1)]^{q})[\ln(m(x)+1)]^{q-1},\ &x\in\mathbb{R}^{n},\\
m(x)>0,\ &x\in\mathbb{R}^{n}.
\end{array}
\right.
\end{equation}
Then they discussed the decay of solutions at infinity
and narrow region principle, which play a key role in obtaining the main result by the process of moving planes.

As another special case, $(-\Delta-\lambda_{f})_{p}^{s}$ turns into the fractional $p$-Laplaccian $(-\Delta)_{p}^{s}$ when $\lambda=0$, we refer the reader to \cite{chen2018maximum,cao2018radial,chen2019symmetry,wang2020hopf,CF2021,CF2022},
when $p\neq2$, the fractional $p$-Laplacian is nonlinear and nonuniform elliptic. Moreover, neither any extension method nor the integral equations method has been found for the fractional $p$-Laplacian, as far as we know. Recently, in \cite{chen2018maximum}, Chen and Li introduced a key boundary estimate lemma, which plays the role of Hopf's lemma. Together with the direct method of moving planes\cite{Chen2017a}, symmetry and monotonicity result for equation $(-\Delta)_{p}^{s}u(x)=g(u(x)),x\in\mathbb{R}^{n}$ is obtained. Furthermore, $(-\Delta)_{p}^{s}$ takes the form of the well-known fractional Laplacian $(-\Delta)^{s}$ when $p=2$, we refer the reader to \cite{wang2015radial,P1979C, B2002M, M2013P, E2009P, cao2021radial}.

In order to guarantee the integrability of \eqref{eq:10}, we require that$$u\in L_{sp}(\mathbb{R}^{n})\cap C_{loc}^{1,1}(\mathbb{R}^{n}),$$ with $$L_{sp}=\{u\in L_{loc}^{p-1}|\int_{\mathbb{R}^{n}}\frac{|1+u(x)|^{p-1}}{(1+|x|)^{n+sp}}dx<\infty\}.$$

In recent years, the nonlocal operators arise from many fields, such as game theory, finance, L\'{e}vy processes, and optimization. The nonlocal nature of these operators make them difficult to study. To circumvent this, Caffarelli and Silvestre \cite{caffarelli2007extension} introduced the extension method which turns the nonlocal problem involving the fractional Laplacian into a local one in higher dimensions. This method has been applied successfully to study equations involving the fractional Laplacian, and a series of fruitful results have been obtained (see \cite{brandle2013concave,chen2016indefinite} and the references therein). One can also use the integral equations method,  such as the method of moving planes in integral forms and regularity lifting to investigate equations involving the fractional Laplacian by first showing that they are equivalent to the corresponding integral equations \cite{chen2015liouville,chen2003qualitative,chen2006classification}.

Reaction-diffusion equations and systems have been studied very extensively in the past few years as models for many problems arising in applications such as, quasi-geostrophic flow \cite{Caffarelli2010Vasseur}, general shadow and activator-inhibitor system \cite{Ni2011the}, nonlocal diffusions \cite{BPSV2014,BCVE2016}, and the fractional porous medium \cite{dePabloQRV2011}.

Hopf's lemma is a classic result in analysis, dating back to the discovery of the maximum principles for harmonic functions\cite{Hopf1927}, and it has become a fundamental and powerful tool in the study of partial differential equations.

Recently, with the extensive study of fractional Laplacians and fractional $p$-Laplacians, a series of fractional version of Hopf's lemmas have been established. For instance, Birkner, L$\acute{o}$pez-Mimbela and Wakolbinger \cite{BLMW2005} introduced a fractional version of Hopf's lemmas for anti-symmetric functions which can be applied immediately to the method of moving planes to establish qualitative properties, such as symmetry and montonicity of solutions for fractional equations. Li and Chen \cite{LCA2019} proved a Hopf type lemma for anti-symmetric solutions to fractional equations by direct estimations. Jin and Li \cite{JL2019} derived a Hopf type lemma for anti-symmetric solutions to fractional $p$-equations. Chen, Li and Qi \cite{CLQ2020} obtained a Hopf type lemma for positive weak super-solutions of fractional $p$-equations. Li and Zhang \cite{li2021zhangsub} derived a Hopf type lemma for positive classical solutions of fractional $p$-equations with Dirichlet conditions.

However, for parabolic fractional equations, there have been very few such results(see e.g. \cite{CWNH2021Asymptotic}). So far as we know is Jin and Xiong's article \cite{JX2014} in which they established a strong maximum principle and a Hopf type lemma for odd solutions of some linear fractional parabolic equations in a finite time with given initial conditions. Lu and Zhu \cite{LZ2015} used the Hopf type lemma to prove maximum principles of fully nonlinear equations. Chen and Wu \cite{chen2021liouville} applied Hopf's lemmas for antisymmetric functions to obtain monotonicity for entire solutions of parabolic fractional equations in a half space. Wang and Chen \cite{wang2020hopf} first established Hopf's lemmas for parabolic fractional $p$-equations for $p\geq2$. Then they derived an asymptotic Hopf's lemma for anti-symmetric solutions to parabolic fractional Laplacians.

Motivated by the above interesting literature, in this paper we consider the Hopf's lemma of the following equation
\begin{equation}\label{eq25}
\left\{
\begin{array}{ll}
\partial_tu(x,t)+(-\Delta-\lambda_{f})_{p}^{s}u(x,t)=g(t,u(x,t)),
&(x,t)\in B_{1}(0)\times[0,+\infty),\\
u(x,t)>0,&(x,t)\in B_{1}(0)\times[0,+\infty),\\
u(x)=0,&(x,t)\in B_{1}^{c}(0)\times[0,+\infty),
\end{array}
\right.
\end{equation}
where $0<s<1$, $p>2,\ n\geq2$. We assume that the function $g(t,u):\ [0,+\infty)\times[0,+\infty)\rightarrow \mathbb{R}$ satisfies

$(1)$ $g(t,u)$ is $C_{loc}^{\frac{\tau}{2s}}$ for $t$, and Lipschitz continuous in $u$ uniformly with respect to $t$, where $\tau\in(0,1)$ satisfies $\frac{\tau}{2s}\in(0,1)$.

$(2)$ $g(t,u)$ satisfies $g(t,0)=0$ and $g_{u}(t,0)\leq0$.

$(3)$ $g_{u}=\frac{\partial g}{\partial u}$ is continuous near $u=0$.

To illustrate the main results of this paper, we start by presenting the notations that
will be used in what follows. Let
\begin{equation*}
T_{\alpha}:=\{{x=(x_{1},x_{2},\ldots,x_{n})\in \mathbb{R}^{n}\ |x_{1}=\alpha,\ \alpha\in\mathbb{R}}\}
\end{equation*}
be the moving planes perpendicular to $x_{1}$-axis,
\begin{equation*}
\Sigma_{\alpha}=\{x\in\mathbb{R}|x_{1}<\alpha\}
\end{equation*}
be the region to the left of the hyperplane $T_{\alpha}$ in $\mathbb{R}^{n}$. We denote the reflection of $x$ with respect to the hyperplane $T_{\alpha}$ as
\begin{equation*}
x^{\alpha}=(2\alpha-x_{1},x_{2},\ldots,x_{n}).
\end{equation*}
Let $u(x,t)$ be a solution of \eqref{eq25} and $u_{\alpha}(x,t):=u(x^{\alpha},t)$. Thus we define
 \begin{equation*}
w_{\alpha}(x,t)=u_{\alpha}(x,t)-u(x,t).
\end{equation*}

To study monotonicity of solutions, we consider the well-known $w-$limit set of $u$
$$w(u):=\{\varphi|\varphi=\lim u(\cdot,t_{k})\ \mbox{for some}\ t_{k}\rightarrow\infty\},$$
with the limit in $C_{0}(B_{1}(0))$. One can derive that $w(u)$ is a nonempty compact subset of $C_{0}(B_{1}(0))$, for each $\varphi(x)\in w(u)$, denote
$$\psi_{\alpha}(x)=\varphi(x^{\alpha})-\varphi(x)=\varphi_{\alpha}(x)-\varphi(x).$$
Obviously, it is the $w-$limit of $w_{\alpha}(x,t)$.

We establish
\begin{thm}
(Asymptotic Hopf's lemma for a generalized tempered fractional p-Laplacian) Assume that $u(x,t)\in(C_{loc}^{1,1}(B_{1}(0))\cap\mathcal{L}_{sp})\times C^{1}(0,\infty)$ is a positive solution to
\begin{equation}
\partial_t u(x,t)+(-\Delta-\lambda_{f})_{p}^{s}u(x,t)=g(t,u(x,t)),
\end{equation}
where $0<s<1$, $p>2,\ n\geq2$.

Assume that
\begin{equation}
g(t,0)=0,\ g\ \mbox{is Lispchitz continuous in u uniformly fo t}.
\end{equation}
Then there exists a positive constant $c$, such that for any $t\rightarrow\infty$ and for all $x$ near the boundary of $B_{1}(0)$, we have
\[\varphi(x)\geq c d^{s}(x),\]
where $d(x)=dist(x,\partial B_{1}(0))$. It follows that
\[\frac{\partial \varphi}{\partial\nu^{s}}(x_{0})<0,\]
where $\nu$ is the outward normal of $\partial B_{1}(0)$ at $x_{0}$ and $\frac{\partial \varphi}{\partial\nu^{s}}$ is the normal derivative of fractional order $s$.
\end{thm}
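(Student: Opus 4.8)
The plan is to construct an explicit subsolution (barrier) of the form $c\,d^{s}(x)$ near $\partial B_{1}(0)$ and compare it against $\varphi$, the $w$-limit of the positive solution $u$. First I would recall that, since $u>0$ solves \eqref{eq25} and $g(t,0)=0$ with $g$ Lipschitz in $u$, one has $\partial_t u + (-\Delta-\lambda_f)^s_p u = g(t,u(x,t)) \ge -c_0\, u(x,t)$ for some constant $c_0\ge 0$ coming from the Lipschitz bound (using also $g_u(t,0)\le 0$ to absorb the sign). Passing to the $w$-limit $\varphi$, which is a nonnegative solution of the stationary inequality $(-\Delta-\lambda_f)^s_p\varphi \ge -c_0\varphi$ in $B_1(0)$ with $\varphi\equiv 0$ outside, and $\varphi\not\equiv 0$ (this nondegeneracy must be justified — see below), I would first invoke the strong maximum principle to get $\varphi>0$ in all of $B_1(0)$, so that on any interior ball $B_{1-\delta}(0)$ we have $\varphi \ge m_\delta>0$.

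Next I would set up the barrier. Fix a boundary point and work in a narrow annular region $A = B_1(0)\setminus \overline{B_{1-\delta}(0)}$. Take $\psi(x) = (1-|x|^2)_+^s$ (or a suitable smooth modification), which behaves like $d^{s}(x)$ near $\partial B_1$. The key computation is the estimate of $(-\Delta-\lambda_f)^s_p\psi(x)$ for $x\in A$: because the exponential tempering factor $e^{\lambda f(|x-y|)}$ is bounded above and below by positive constants on bounded sets (and $\lambda$ is small), the operator $(-\Delta-\lambda_f)^s_p$ is comparable to the ordinary fractional $p$-Laplacian $(-\Delta)^s_p$ on the relevant scales, so the known boundary behavior $(-\Delta)^s_p\,[(1-|x|^2)_+^s] \le -C d(x)^{-sp+s(p-1)}\cdot(\text{bounded})$ — more precisely the estimate that $(-\Delta)^s_p \psi$ stays bounded above by a negative-or-controlled quantity near the boundary while the positive mass of $\varphi$ on $B_{1-\delta}$ contributes a strictly negative term through the kernel — carries over with adjusted constants. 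Concretely, one shows $(-\Delta-\lambda_f)^s_p\,(\varepsilon\psi)(x) \le -c_1 < 0$ in $A$ for $\varepsilon$ small, using the $\varphi\ge m_\delta$ contribution from the far region, and $-c_0(\varepsilon\psi)(x) \ge -c_1$ in $A$ by taking $\varepsilon$ smaller still. Hence $\varepsilon\psi$ is a subsolution of $(-\Delta-\lambda_f)^s_p v \ge -c_0 v$ in $A$, with $\varepsilon\psi \le \varphi$ on $\partial B_{1-\delta}(0)$ (by choice of $\varepsilon \le m_\delta/\|\psi\|_\infty$) and $\varepsilon\psi = 0 \le \varphi$ outside $B_1(0)$.

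Then I would apply the comparison principle / maximum principle for $(-\Delta-\lambda_f)^s_p$ in the narrow region $A$ (the narrow-region principle of the type established in \cite{ZHAW2021} for this operator, which is available precisely because $A$ is thin and the zeroth-order coefficient $c_0$ is then dominated) to conclude $\varphi(x) \ge \varepsilon\psi(x) \ge c\, d^s(x)$ throughout $A$, i.e. for all $x$ near $\partial B_1(0)$. Finally, dividing by $d^s(x)$ and letting $x\to x_0 \in \partial B_1(0)$ along the inner normal direction gives $\liminf \varphi(x)/d^s(x) \ge c>0$, which is exactly the statement $\dfrac{\partial \varphi}{\partial \nu^s}(x_0) = -\lim_{x\to x_0}\dfrac{\varphi(x)}{d^s(x)} < 0$.

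I expect the main obstacle to be twofold: (i) establishing the nondegeneracy $\varphi\not\equiv 0$ — one must rule out that $u(\cdot,t_k)\to 0$, which typically requires either an assumption or an argument using that $u(x,t)>0$ is a genuine (nontrivial) solution together with a lower bound that survives the limit; and (ii) making the barrier estimate for $(-\Delta-\lambda_f)^s_p\psi$ rigorous, since the tempering factor $e^{\lambda f(|x-y|)}$ with only the hypothesis that $f$ is nondecreasing requires care in controlling the PV integral near the boundary — one needs that $f$ does not blow up too fast so that the kernel comparison with $|x-y|^{-n-sp}$ holds on the annular scale, and that $\lambda$ small keeps all constants uniform. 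The nonlinearity and non-uniform ellipticity of the $p$-Laplacian for $p>2$ also means the usual linear barrier tricks must be adapted, but the computation $(-\Delta)^s_p[(1-|x|^2)_+^s]$ near the boundary is by now standard (cf. \cite{chen2018maximum, JL2019, li2021zhangsub}) and transfers to the tempered operator with the kernel-comparison argument above.
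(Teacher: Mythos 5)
Your overall strategy --- a barrier of the form $\varepsilon(1-|x|^2)_+^s$ in a boundary annulus, pushed from below by strict positivity on a compact interior set, combined with a comparison principle and the boundedness of $(-\Delta-\lambda_{f})_{p}^{s}$ applied to the barrier (Corollary \ref{corr1}) --- is the same as the paper's. But one step does not hold as written, and the paper is structured precisely to avoid it: you assert that the $w$-limit $\varphi$ satisfies the \emph{stationary} inequality $(-\Delta-\lambda_{f})_{p}^{s}\varphi\ge -c_0\varphi$ in $B_1(0)$. This does not follow from $u(\cdot,t_k)\to\varphi$: an element of the $w$-limit set of a parabolic flow need not be a stationary sub- or supersolution, because $\partial_t u(\cdot,t_k)$ has no reason to tend to $0$ (nor even to have a favorable sign) along the sequence $t_k$. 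Dropping the $\partial_t u$ term therefore requires a justification your proposal does not supply, and without it your elliptic comparison in the annulus has no differential inequality for $\varphi$ to compare against.

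The paper circumvents exactly this by never leaving the parabolic setting: it sets $u_k(x,t)=u(x,t+t_k-1)$, extracts a limit $u_\infty(x,t)$ solving a parabolic equation on a time slab with $u_\infty(\cdot,1)=\varphi$, builds the \emph{time-dependent} subsolution $\underline{u}=\chi_{D}(x)u_\infty(x,t)+\delta\eta(t)\Phi(x)$ with a temporal cutoff $\eta$, applies the parabolic maximum principle (Lemma \ref{lem2}) to $w=u_\infty-\underline{u}$ on $B_\varepsilon(\bar{x})\times[1-\varepsilon_0,1+\varepsilon_0]$, and only then evaluates at $t=1$ to get $\varphi(x)=u_\infty(x,1)\ge\delta(1-|x|^2)^s\ge c\,d^s(x)$. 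Your argument can be repaired by adopting this time-slab device; the remaining ingredients you list (the kernel comparison for the tempered factor, the interior bound $\varphi\ge m_\delta$, the choice $\varepsilon\le m_\delta/\|\psi\|_\infty$ so the barrier lies below the solution on the whole nonlocal complement $B_{1-\delta}(0)\cup B_1^c(0)$, and the passage from $\varphi\ge c\,d^s$ to $\frac{\partial\varphi}{\partial\nu^s}(x_0)<0$) match the paper's. The nondegeneracy issue you flag in (i) is not resolved in the paper either; it is carried as a hypothesis of the asymptotic strong maximum principle (some $\varphi\in w(u)$ is positive somewhere).
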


Throughout the paper, C will be positive constants which can be different from line to line and only the relevant dependence is specified.
\section{Some lemmas}
\begin{lem}\label{lem2}
(A maximum principle for a generalized tempered fractional p-Laplacian) Let $\Omega\subset\Sigma_{\alpha}$ be a bounded or unbounded region. Assume that $w_{\alpha}\in L_{sp}\cap C_{loc}^{1,1}(\Omega)$ is lower semi-continuous on $\bar{\Omega}$ and uniformly bounded about $t$,
\begin{equation}\label{eq7}
\left\{
\begin{array}{ll}
\partial_tw_{\alpha}(x,t)+(-\Delta-\lambda_{f})_{p}^{s}u_{\alpha}(x,t)-(-\Delta-\lambda_{f})_{p}^{s}u(x,t)
\\ \quad=c(x,t)w_{\alpha}(x,t),
&(x,t)\in\Omega\times[0,+\infty),\\
w_{\alpha}(x,t)\geq0,
&(x,t)\in(\Sigma_{\alpha}\setminus\Omega)\times[0,+\infty),\\
w_{\alpha}(x^{\alpha},t)=-w_{\alpha}(x,t),&x\in\Sigma_{\alpha}\times[0,+\infty).
\end{array}
\right.
\end{equation}
If $$c(x,t)<0\ \ (x,t)\in\Omega\times[0,+\infty),$$ then
\begin{equation}
w_{\alpha}(x,t)\geq0,\ (x,t)\in\Sigma_{\alpha}\times[0,+\infty).
\end{equation}
 Furthermore, the following strong maximum principle also holds:\\
Either\begin{equation}
w_{\alpha}(x,t)\equiv0,\ \ (x,t)\in\Omega\times[0,+\infty)
\end{equation}
or\begin{equation}\ w_{\alpha}(x,t)>0,\ (x,t)\in\Omega\times[0,+\infty).
\end{equation}
\end{lem}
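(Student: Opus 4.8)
\textbf{Proof plan for Lemma \ref{lem2}.}

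The plan is to combine two ingredients: the ordering structure coming from the anti-symmetry $w_\alpha(x^\alpha,t)=-w_\alpha(x,t)$ together with the monotonicity of $G$, and a parabolic comparison argument. First I would establish the weak form of the maximum principle, i.e. $w_\alpha\ge 0$ on $\Sigma_\alpha\times[0,+\infty)$. Suppose for contradiction that the (lower semi-continuous, uniformly bounded in $t$) function $w_\alpha$ takes negative values somewhere in $\Omega\times[0,+\infty)$. Since $w_\alpha\ge 0$ on $(\Sigma_\alpha\setminus\Omega)\times[0,+\infty)$ by hypothesis, and $w_\alpha$ is uniformly bounded in $t$, a minimizing-sequence argument produces a point (or a limiting point) $(\bar x,\bar t)\in\bar\Omega\times[0,+\infty)$ at which $w_\alpha$ attains (or approaches) its negative infimum; after the standard perturbation trick (subtracting a small $\varepsilon\eta(t)$ or using $w_\alpha+\epsilon$) one may assume the infimum is attained at an interior spatial point $\bar x\in\Omega$ at some time $\bar t$, with $\partial_t w_\alpha(\bar x,\bar t)\le 0$ and $w_\alpha(\bar x,\bar t)<0$.

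The heart of the argument is then to show that $(-\Delta-\lambda_f)_p^s u_\alpha(\bar x,\bar t)-(-\Delta-\lambda_f)_p^s u(\bar x,\bar t)<0$ at such a negative minimum point, which combined with $c(\bar x,\bar t)<0$ and $w_\alpha(\bar x,\bar t)<0$ forces $\partial_t w_\alpha(\bar x,\bar t)>0$, a contradiction. To do this I would write the difference of the two nonlocal operators as
\[
C_{n,sp}\,\mathrm{PV}\!\int_{\mathbb R^n}\frac{G(u_\alpha(\bar x)-u_\alpha(y))-G(u(\bar x)-u(y))}{e^{\lambda f(|\bar x-y|)}\,|\bar x-y|^{n+sp}}\,dy,
\]
then split $\mathbb R^n=\Sigma_\alpha\cup T_\alpha\cup(\Sigma_\alpha)^\alpha$ and, on the reflected half, change variables $y\mapsto y^\alpha$. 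Using $u_\alpha(y^\alpha)=u(y)$, $u(y^\alpha)=u_\alpha(y)$, and the key kernel inequality $|\bar x-y^\alpha|\ge|\bar x-y|$ for $\bar x,y\in\Sigma_\alpha$ (so that, since $f$ is nondecreasing, $e^{\lambda f(|\bar x-y^\alpha|)}|\bar x-y^\alpha|^{n+sp}\ge e^{\lambda f(|\bar x-y|)}|\bar x-y|^{n+sp}$), one reduces the integrand to an expression involving $w_\alpha(\bar x,\bar t)$, $w_\alpha(y,\bar t)$ and the monotone function $G$. Since $u_\alpha(\bar x)-u_\alpha(y)\le u(\bar x)-u(y)$ at a minimum of $w_\alpha$ (because $w_\alpha(y,\bar t)\ge w_\alpha(\bar x,\bar t)$ on $\Sigma_\alpha$, and an anti-symmetry estimate controls the reflected contribution), the strict monotonicity of $G$ and the strict inequality $w_\alpha(\bar x,\bar t)<0$ yield the desired strict negativity of the operator difference. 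This is the step I expect to be the main obstacle: one must handle the principal-value singularity carefully (the $C_{loc}^{1,1}$ hypothesis is used precisely to make the PV integral finite and to justify the pointwise manipulations near $y=\bar x$), and one must be attentive that the sign of the nonlocal term is strictly negative rather than merely nonpositive, which is what the contradiction needs.

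Finally, for the strong maximum principle, I would argue that if $w_\alpha\ge 0$ and $w_\alpha(\bar x,\bar t)=0$ at some interior point $(\bar x,\bar t)\in\Omega\times[0,+\infty)$, then evaluating the equation at $(\bar x,\bar t)$ gives $\partial_t w_\alpha(\bar x,\bar t)+\big[(-\Delta-\lambda_f)_p^s u_\alpha-(-\Delta-\lambda_f)_p^s u\big](\bar x,\bar t)=c(\bar x,\bar t)\,w_\alpha(\bar x,\bar t)=0$. Running the same kernel-splitting computation as above but now with $w_\alpha(\bar x,\bar t)=0$ shows the nonlocal difference equals a strictly negative multiple of $\int_{\Sigma_\alpha} w_\alpha(y,\bar t)\big(\text{positive kernel}\big)\,dy$ unless $w_\alpha(\cdot,\bar t)\equiv 0$ on $\Sigma_\alpha$; together with $\partial_t w_\alpha(\bar x,\bar t)\le 0$ at a spatial minimum this forces $w_\alpha(\cdot,\bar t)\equiv 0$ on $\Sigma_\alpha$. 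A continuation argument in $t$ (using continuity in $t$ and the structure of the equation, as in the fractional-$p$-Laplacian parabolic case of Wang--Chen) then propagates this to $w_\alpha\equiv 0$ on $\Omega\times[0,+\infty)$, giving the dichotomy.
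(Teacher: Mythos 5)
Your plan is correct and follows essentially the same route as the paper: contradiction at a negative minimum with $\partial_t w_\alpha\le 0$ there, the splitting of $\mathbb{R}^n$ into $\Sigma_\alpha$ and its reflection with the change of variables $y\mapsto y^\alpha$, the kernel comparison $|\bar x-y^\alpha|\ge|\bar x-y|$ combined with the monotonicity of $f$ and of $G$, and evaluation of the equation at a zero minimum for the strong maximum principle. The only differences are minor: the paper first multiplies by $e^{qt}$ with $0<q<|c(x_0,t_0)|$ and works on finite time windows $[\underline t,T]$ (a step your direct use of the strict sign $c<0$ renders unnecessary), and your strong-maximum-principle step is actually more careful than the paper's, which asserts strict negativity of the nonlocal difference at the zero minimum without excluding the degenerate case $w_\alpha(\cdot,t_*)\equiv 0$ that produces the first branch of the dichotomy.
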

\begin{proof} Let $x=(x_{1},x')$, denote $$\bar{w}_{\alpha}(x,t)=e^{qt}w_{\alpha}(x,t),$$
where $q>0$ is a constant.

According to the definition of $\bar{w}_{\alpha}(x,t)$ and \eqref{eq7}, $\bar{w}(x,t)$ satisfies
\begin{equation}\label{eq51}
\left\{
\begin{array}{ll}
\partial_t\bar{w}_{\alpha}(x,t)+e^{qt}[(-\Delta-\lambda_{f})_{p}^{s}u_{\alpha}(x,t)-(-\Delta-\lambda_{f})_{p}^{s}u(x,t)]
\\ \quad=(c(x,t)+q)\bar{w}_{\alpha}(x,t),
&(x,t)\in\Omega\times[0,+\infty),\\
\\ \bar{w}_{\alpha}(x,t)\geq0,
&(x,t)\in(\Sigma_{\alpha}\setminus\Omega)\times[0,+\infty).
\end{array}
\right.
\end{equation}

Now we show
\begin{equation}\label{eq11}
\bar{w}_{\alpha}(x,t)\geq\min\{0,\underset{\Omega}\inf\bar{w}_{\alpha}(x,\underline{t})\},\ (x,t)\in\Omega\times[\underline{t},T]
\end{equation}
for any $[\underline{t},T]\subset[0,+\infty)$. Otherwise, there exists $(x_{0},t_{0})\in\Omega\times(\underline{t},T]$ such that
$$\bar{w}_{\alpha}(x_{0},t_{0})=\underset{\Sigma_{\alpha\times(\underline{t},T]}}\inf\bar{w}_{\alpha}(x,t)<\min\{0,\underset{\Omega}\inf\bar{w}_{\alpha}(x,\underline{t})\},$$
then
\begin{equation}\label{eq8}
\partial_t\bar{w}_{\alpha}(x_{0},t_{0})\leq0,
\end{equation}
and
$$\bar{w}_{\alpha}(x_{0},t_{0})-\bar{w}_{\alpha}(y,t_{0})\leq0,\ \ y\in\Sigma_{\alpha},$$
\begin{equation}\label{LS1C2}
w_{\alpha}(x_{0},t_{0})-w_{\alpha}(y,t_{0})=\frac{1}{e^{qt_0}}(\bar{w}_{\alpha}(x_{0},t_{0})-\bar{w}_{\alpha}(y,t_{0}))\leq0,\ \ y\in\Sigma_{\alpha}.
\end{equation}
It follows that
\begin{equation}\label{LS1C3}\begin{split}
0&\geq w_{\alpha}(x_{0},t_{0})-w_{\alpha}(y,t_{0})\\
&=u_{\alpha}(x_{0},t_{0})-u(x_{0},t_{0})-(u_{\alpha}(y,t_{0})-u(y,t_{0}))\\
&=(u_{\alpha}(x_{0},t_{0})-u_{\alpha}(y,t_{0}))-(u(x_{0},t_{0})-u(y,t_{0})).
\end{split}
\end{equation}
Combing above and the fact that $G(t)$ is strictly increasing, we have
\begin{equation}\label{LS1C3}
0\geq G(u_{\alpha}(x_{0},t_{0})-u_{\alpha}(y,t_{0}))-G(u(x_{0},t_{0})-u(y,t_{0})),\ \  y\in\Sigma_{\alpha}.
\end{equation}
Noting that $|x_{0}-y|\leq|x_{0}-y^{\alpha}|$ and $f$ is a nondecreasing map with respect to $|x-y|$, we derive that
\begin{equation}\label{LS1C4}
\frac{1}{e^{\lambda f(|x_{0}-y|)}|x_{0}-y|^{n+sp}}
\geq\frac{1}{e^{\lambda f(|x_{0}-y^{\alpha}|)}|x_{0}-y^{\alpha}|^{n+sp}},\ \  y\in\Sigma_{\alpha}.
\end{equation}

Combining \eqref{LS1C3} and \eqref{LS1C4}, we have the following inequality:
\begin{equation}\label{LS1C5}\begin{split}
&\frac{G(u_{\alpha}(x_{0},t_{0})-u_{\alpha}(y,t_{0}))-G(u(x_{0},t_{0})-u(y,t_{0}))}{e^{\lambda f(|x_{0}-y|)}|x_{0}-y|^{n+sp}}\\
\leq&\frac{G(u_{\alpha}(x_{0},t_{0})-u_{\alpha}(y,t_{0}))-G(u(x_{0},t_{0})-u(y,t_{0}))}{e^{\lambda f(|x_{0}-y^{\alpha}|)}|x_{0}-y^{\alpha}|^{n+sp}},\ \  y\in\Sigma_{\alpha}.
\end{split}
\end{equation}
By the reflection we have
\begin{align}\label{eq2}
&(-\Delta-\lambda_{f})_{p}^{s}u_{\alpha}(x_{0},t_{0})-(-\Delta-\lambda_{f})_{p}^{s} u(x_{0},t_{0})\notag\\
  ={}
  &C_{n,sp}\textit{PV}\int_{\mathbb{R}^{n}}\frac{G(u_{\alpha}(x_{0},t_{0})-u_{\alpha}(y,t_{0}))-G(u(x_{0},t_{0})-u(y,t_{0}))}{e^{\lambda f(|x_{0}-y|)}|x_{0}-y|^{n+sp}}dy\notag\\
  ={}
  &C_{n,sp}\textit{PV}\int_{\Sigma_{\alpha}}\frac{G(u_{\alpha}(x_{0},t_{0})-u_{\alpha}(y,t_{0}))-G(u(x_{0},t_{0})-u(y,t_{0}))}{e^{\lambda f(|x_{0}-y|)}|x_{0}-y|^{n+sp}}dy\notag\\
  +&C_{n,sp}\textit{PV}\int_{\mathbb{R}^{n}\setminus\Sigma_{\alpha}}\frac{G(u_{\alpha}(x_{0},t_{0})-u_{\alpha}(y,t_{0}))-G(u(x_{0},t_{0})-u(y,t_{0}))}{e^{\lambda f(|x_{0}-y|)}|x_{0}-y|^{n+sp}}dy \notag\\
  ={}
  &C_{n,sp}\textit{PV}\int_{\Sigma_{\alpha}}\frac{G(u_{\alpha}(x_{0},t_{0})-u_{\alpha}(y,t_{0}))-G(u(x_{0},t_{0})-u(y,t_{0}))}{e^{\lambda f(|x_{0}-y|)}|x_{0}-y|^{n+sp}}dy\notag\\
  +&C_{n,sp}\textit{PV}\int_{\Sigma_{\alpha}}\frac{G(u_{\alpha}(x_{0},t_{0})-u(y,t_{0}))-G(u(x_{0},t_{0})-u_{\alpha}(y,t_{0}))}{e^{\lambda f(|x_{0}-y^{\alpha}|)}|x_{0}-y^{\alpha}|^{n+sp}}dy\notag\\
   \leq{}
   &C_{n,sp}\textit{PV}\int_{\Sigma_{\alpha}}\frac{G(u_{\alpha}(x_{0},t_{0})-u_{\alpha}(y,t_{0}))-G(u(x_{0},t_{0})-u(y,t_{0}))}{e^{\lambda f(|x_{0}-y^{\alpha}|)}|x_{0}-y^{\alpha}|^{n+sp}}dy\notag\\
  +&C_{n,sp}\textit{PV}\int_{\Sigma_{\alpha}}\frac{G(u_{\alpha}(x_{0},t_{0})-u(y,t_{0}))-G(u(x_{0},t_{0})-u_{\alpha}(y,t_{0}))}{e^{\lambda f(|x_{0}-y^{\alpha}|)}|x_{0}-y^{\alpha}|^{n+sp}}dy\notag\\
  ={}
  &C_{n,sp}\textit{PV}\int_{\Sigma_{\alpha}}\frac{1}{e^{\lambda f(|x_{0}-y^{\alpha}|)}|x_{0}-y^{\alpha}|^{n+sp}}
  \times\big[\{\vphantom{\frac1x}G(u_{\alpha}(x_{0},t_{0})-u_{\alpha}(y,t_{0}))\notag\\
\phantom{=bigg1(x+{}}
  -&G(u(x_{0},t_{0})-u_{\alpha}(y,t_{0}))\}+\{G(u_{\alpha}(x_{0},t_{0})-u(y,t_{0}))\notag\\
  &-G(u(x_{0},t_{0})-u(y,t_{0}))\}\big]dy \notag\\
    ={}
  &C_{n,sp}\textit{PV}\int_{\Sigma_{\alpha}}\frac{G'(\xi(y))(M_{1}(y)-M_{2}(y))+G'(\eta(y))(M_{3}(y)-M_{4}(y))}{e^{\lambda f(|x_{0}-y^{\alpha}|)}|x_{0}-y^{\alpha}|^{n+sp}}dy\notag\\
  ={}
  &C_{n,sp}\textit{PV}\int_{\Sigma_{\alpha}}\frac{w_{\alpha}(x_{0},t_{0})[G'(\xi(y))+G'(\eta(y))]}{e^{\lambda f(|x_{0}-y^{\alpha}|)}|x_{0}-y^{\alpha}|^{n+sp}}dy\notag\\
  ={}
  &C_{n,sp}\frac{\bar{w}_{\alpha}(x_{0},t_{0})}{e^{qt}}\textit{PV}\int_{\Sigma_{\alpha}}\frac{G'(\xi(y))+G'(\eta(y))}{e^{\lambda f(|x_{0}-y^{\alpha}|)}|x_{0}-y^{\alpha}|^{n+sp}}dy,
\end{align}
where $$M_{1}(y)=u_{\alpha}(x_{0},t_{0})-u_{\alpha}(y,t_{0}),\ \ M_{2}(y)=u(x_{0},t_{0})-u_{\alpha}(y,t_{0}),$$
$$M_{3}(y)=u_{\alpha}(x_{0},t_{0})-u(y,t_{0}),\ \ M_{4}(y)=u(x_{0},t_{0})-u(y,t_{0}),$$
$$M_{1}(y)\leq\xi(y)\leq M_{2}(y),\ \ M_{3}(y)\leq\eta(y)\leq M_{4}(y),$$
and the only inequality above is from \eqref{LS1C5}.
Therefore,
\begin{align}\label{eq9}
\begin{split}
&e^{qt}[(-\Delta-\lambda_{f})_{p}^{s}u_{\alpha}(x_{0},t_{0})-(-\Delta-\lambda_{f})_{p}^{s}u(x_{0},t_{0})]
\end{split}\notag\\
\leq{}
&C_{n,sp}\bar{w}_{\alpha}(x_{0},t_{0})\textit{PV}\int_{\Sigma_{\alpha}}\frac{G'(\xi(y))+G'(\eta(y))}{e^{\lambda f(|x_{0}-y^{\alpha}|)}|x_{0}-y^{\alpha}|^{n+sp}}dy\notag\\
<{}
&0.
\end{align}
Combining \eqref{eq8} and \eqref{eq9} yelids
$$(c(x,t)+q)\bar{w}_{\alpha}(x_{0},t_{0})<0.$$

According to the hypothesis condition of $c(x,t)$ in lemma \ref{lem2}, we can assume $0<q<|c(x_{0},t_{0})|$, we have
$$(c(x_{0},t_{0})+q)\bar{w}_{\alpha}(x_{0},t_{0})>0.$$
This is a contradiction.
This contradicts \eqref{eq51}, and \eqref{eq11} holds. That is
\begin{align}\label{eq12}
&\bar{w}_{\alpha}(x,t)\geq\min\{0,\underset{\Omega}\inf\bar{w}_{\alpha}(x,\underline{t})\}\notag \\
={}
&\min\{0, e^{q\underline{t}}\underset{\Omega}\inf w_{\alpha}(x,\underline{t})\}\notag \\
\geq{}
&-C,\ \ \ (x,t)\in\Omega\times[\underline{t},T].
\end{align}
Furthermore, we obtain
$$w_{\alpha}(x,t)\geq-Ce^{-q\underline{t}},\ \forall t>\underline{t}.$$
Let $\underline{t}\rightarrow+\infty$ and we have
\begin{equation}
w_{\alpha}(x,t)\geq0,\ (x,t)\in\Omega\times[0,+\infty).
\end{equation}
Combing \eqref{eq7} and \eqref{eq12}, we know
$$w_{\alpha}(x,t)\geq0,\ (x,t)\in\Sigma_{\alpha}\times[0,+\infty).$$
Furthermore, if $w_{\alpha}(x,t)=0$ at some point $(x_{\ast},t_{\ast})$ in $\Omega$, then
$$w_{\alpha}(x_{\ast},t_{\ast})=\underset{\Sigma_{\alpha}\times\mathbb{R}}\inf\bar{w}_{\alpha}(x,t)=0,\ \ \partial_t w(x_{\ast},t_{\ast})=0,$$
\begin{equation}
(-\Delta-\lambda_{f})_{p}^{s}u_{\alpha}(x_{\ast},t_{\ast})-(-\Delta-\lambda_{f})_{p}^{s}u(x_{\ast},t_{\ast})
<0.
\end{equation}
However, it follows from \eqref{eq7} that
\begin{align*}
0&>\partial_tw_{\alpha}(x_{\ast},t_{\ast})+(-\Delta-\lambda_{f})_{p}^{s}u_{\alpha}(x_{\ast},t_{\ast})-(-\Delta-\lambda_{f})_{p}^{s}u(x_{\ast},t_{\ast})\notag \\
&=c(x_{\ast},t_{\ast})w_{\alpha}(x_{\ast},t_{\ast})\notag \\
&=0.
\end{align*}
This is a contradiction. Therefore, the strong maximum principle also holds. The proof is completed.
\end{proof}

\begin{rem}
If we replace the first equation in \eqref{eq7} by
$$\partial_tw_{\alpha}(x,t)+(-\Delta-\lambda_{f})_{p}^{s}u_{\alpha}(x,t)-(-\Delta-\lambda_{f})_{p}^{s}u(x,t)
\geq0,\ (x,t)\in\Omega\times[0,+\infty),$$
Then lemma \ref{lem2} still holds.
\end{rem}

\begin{lem}\label{narrow}
(Narrow region principle) Let $\Omega\subset\Sigma_{\alpha}$ be a bounded or unbounded narrow region, such that it is contained in $\{x\ |\alpha-\delta<x_{1}<\alpha\}$ with small $\delta$. Assume that $w_{\alpha}\in L_{sp}\cap C_{loc}^{1,1}(\Omega)$ is lower semi-continuous on $\bar{\Omega}$ and uniformly bounded about $t$,
\begin{equation}\label{eq1}
\left\{
\begin{array}{ll}
\partial_tw_{\alpha}(x,t)+(-\Delta-\lambda_{f})_{p}^{s}u_{\alpha}(x,t)-(-\Delta-\lambda_{f})_{p}^{s}u(x,t)
\\ \quad=c(x,t)w_{\alpha}(x,t),
&(x,t)\in\Omega\times[0,+\infty),\\
w_{\alpha}(x,t)\geq0,
&(x,t)\in(\Sigma_{\alpha}\setminus\Omega)\times[0,+\infty),\\
w_{\alpha}(x^{\alpha},t)=-w(x,t),&x\in\Sigma_{\alpha}\times[0,+\infty),
\end{array}
\right.
\end{equation}
where $c(x,t)<0$ is bounded from above in $\Omega$, then
\begin{equation}
\varliminf_{t\rightarrow\infty}w_{\alpha}(x,t)\geq0,\ (x,t)\in\Sigma_{\alpha}\times[0,+\infty),
\end{equation}
for $\delta$ being small enough. Furthermore, if $w_{\alpha}(x,t)=0$ at some point in $\Omega$, then
\begin{equation}
w_{\alpha}(x,t)\equiv0,\ \mbox{almost everywhere in}\ \Omega.
\end{equation}
\end{lem}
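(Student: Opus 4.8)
The plan is to repeat, almost verbatim, the argument of Lemma~\ref{lem2}, with the smallness of $\delta$ playing the role that the sign condition $c<0$ played there. First I would set $\bar{w}_\alpha(x,t)=e^{qt}w_\alpha(x,t)$ for a constant $q>0$ to be fixed later; then $\bar{w}_\alpha$ satisfies the analogue of \eqref{eq51} with $c(x,t)$ replaced by $c(x,t)+q$, is nonnegative on $(\Sigma_\alpha\setminus\Omega)\times[0,\infty)$, and is antisymmetric in $x$. I would show that, for $\delta$ small enough,
\begin{equation*}
\bar{w}_\alpha(x,t)\ \ge\ \min\Big\{0,\ \inf_{\Omega}\bar{w}_\alpha(\cdot,\underline t)\Big\},\qquad (x,t)\in\Omega\times[\underline t,T],
\end{equation*}
for every interval $[\underline t,T]\subset[0,\infty)$; letting $\underline t\to\infty$ and invoking the uniform bound on $w_\alpha$ exactly as in the last lines of the proof of Lemma~\ref{lem2} then yields $\varliminf_{t\to\infty}w_\alpha(x,t)\ge 0$ on $\Sigma_\alpha\times[0,\infty)$.

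To prove the displayed inequality I would argue by contradiction. If it fails there is, as in Lemma~\ref{lem2}, a point $(x_0,t_0)\in\Omega\times(\underline t,T]$ with $\bar{w}_\alpha(x_0,t_0)=\inf_{\Sigma_\alpha\times(\underline t,T]}\bar{w}_\alpha<\min\{0,\inf_\Omega\bar{w}_\alpha(\cdot,\underline t)\}$, so $\partial_t\bar{w}_\alpha(x_0,t_0)\le 0$ and, by antisymmetry, $w_\alpha(x_0,t_0)-w_\alpha(y,t_0)\le 0$ for all $y\in\Sigma_\alpha$. Running the chain of identities and inequalities \eqref{eq2} verbatim (the monotonicity of $G$, the kernel comparison \eqref{LS1C4}--\eqref{LS1C5}, and the mean value theorem) gives
\begin{align*}
&(-\Delta-\lambda_{f})_{p}^{s}u_\alpha(x_0,t_0)-(-\Delta-\lambda_{f})_{p}^{s}u(x_0,t_0)\ \le\ C_{n,sp}\,\frac{\bar{w}_\alpha(x_0,t_0)}{e^{qt_0}}\,\mathcal{I},\\
&\text{where}\qquad \mathcal{I}\ :=\ \textit{PV}\!\int_{\Sigma_\alpha}\frac{G'(\xi(y))+G'(\eta(y))}{e^{\lambda f(|x_0-y^\alpha|)}\,|x_0-y^\alpha|^{n+sp}}\,dy\ \ge\ 0,
\end{align*}
with $\xi(y),\eta(y)$ as in Lemma~\ref{lem2}. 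Substituting this together with $\partial_t\bar{w}_\alpha(x_0,t_0)\le 0$ into the equation for $\bar{w}_\alpha$ and dividing by $\bar{w}_\alpha(x_0,t_0)<0$ produces $C_{n,sp}\,\mathcal{I}\le c(x_0,t_0)+q\le \bar{c}+q$, where $\bar{c}$ is an upper bound for $c$ on $\Omega$. Hence it is enough to guarantee $C_{n,sp}\,\mathcal{I}>\bar{c}+q$ once $\delta$ is small.

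The heart of the proof is therefore a lower bound for $\mathcal{I}$ that blows up as $\delta\to 0$. Since $x_0\in\{\alpha-\delta<x_1<\alpha\}$, the point $x_0$ is within distance $d:=\alpha-x_{0,1}<\delta$ of the half space $\{z_1\ge\alpha\}$. I would restrict $\mathcal{I}$ to $\{y\in\Sigma_\alpha:\ |y-x_0^\alpha|<\rho\}$ and observe that, as $y\to x_0^\alpha$, one has $u_\alpha(y)\to u(x_0)$ and $u(y)\to u(x_0^\alpha)$, so the intermediate values furnished by the mean value theorem satisfy $G'(\xi(y))\to|w_\alpha(x_0,t_0)|^{p-2}$ and $G'(\eta(y))\to|w_\alpha(x_0,t_0)|^{p-2}$; thus, for $\rho$ small (depending on the modulus of continuity of $u$ and on $|w_\alpha(x_0,t_0)|$), $G'(\xi(y))+G'(\eta(y))\ge|w_\alpha(x_0,t_0)|^{p-2}$ on that ball, while the tempering weight $e^{-\lambda f(|x_0-y^\alpha|)}$ is bounded below there. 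The change of variables $z=y^\alpha$ and the scaling identity $\int_{\{z_1>\alpha\}\cap\{|z-x_0|<\rho\}}|x_0-z|^{-n-sp}\,dz\ge c_{n,sp}\,(d^{-sp}-\rho^{-sp})$ then give $\mathcal{I}\ge c\,|w_\alpha(x_0,t_0)|^{p-2}\,\delta^{-sp}$ for $\delta$ small, which exceeds $\bar{c}+q$ and yields the contradiction. This proves the displayed inequality, hence $\varliminf_{t\to\infty}w_\alpha\ge 0$. Finally, for the strong maximum principle I would argue exactly as at the end of the proof of Lemma~\ref{lem2}: if $w_\alpha(x_*,t_*)=0$ at a point of $\Omega$, then $\partial_t w_\alpha(x_*,t_*)=0$, the nonlocal difference at $(x_*,t_*)$ is $\le 0$, hence $=0$ by the equation, and the strict monotonicity of $G$ together with \eqref{LS1C4} forces $w_\alpha(\cdot,t_*)\equiv 0$ almost everywhere in $\Omega$.

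The step I expect to be the main obstacle is exactly this lower bound for $\mathcal{I}$. When $p>2$ the factor $G'(\xi)+G'(\eta)$ degenerates where $\xi,\eta$ vanish, so one cannot exploit the singularity of the kernel at $y=x_0$ (there $\xi,\eta\to 0$); one is forced to use the singularity at $y=x_0^\alpha$, and the gain then comes weighted by $|w_\alpha(x_0,t_0)|^{p-2}$ with a localization radius $\rho$ that also shrinks with $|w_\alpha(x_0,t_0)|$. Closing the argument when the relevant infimum is small therefore requires some care: one chooses $q$ first and $\delta$ afterwards, uses that $\bar{c}+q$ may be taken small whenever $\bar{c}\le 0$, and, when $\bar{c}>0$, supplements the estimate by the contribution of a fixed region on which $u_\alpha$ is bounded away from $u(x_0)$ --- possible because $u>0$ in $B_1(0)$ --- where $G'(\xi)+G'(\eta)$ admits a lower bound independent of $|w_\alpha(x_0,t_0)|$. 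Apart from this point, the proof is a line-by-line repetition of Lemma~\ref{lem2} combined with the narrowness estimate above.
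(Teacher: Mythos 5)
Your proposal follows essentially the same route as the paper's proof: the substitution $\bar{w}_{\alpha}=e^{qt}w_{\alpha}$, the contradiction at an interior negative minimum $(x_{0},t_{0})$, the chain of inequalities from Lemma \ref{lem2} leading to the bound by $C_{n,sp}\,\bar{w}_{\alpha}(x_{0},t_{0})e^{-qt_{0}}\mathcal{I}$, and a narrowness estimate $\mathcal{I}\gtrsim\delta^{-sp}$ obtained by localizing the kernel near the reflection of $x_{0}$ (the paper uses the box $H$ at distance of order $\delta$ from $x_{0}$ on the reflected side). The delicate point you single out --- that for $p>2$ the weight $G'(\xi)+G'(\eta)$ degenerates and the lower bound on $\mathcal{I}$ comes with a factor $|w_{\alpha}(x_{0},t_{0})|^{p-2}$ --- is real, but the paper simply asserts the bound $\mathcal{I}\geq C/\delta^{sp}$ without addressing it (and also omits any proof of the final "furthermore" clause), so your treatment is, if anything, more explicit than the paper's on the only step that requires care.
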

\begin{proof}
Let $x=(x_{1},x')$, denote $$\bar{w}_{\alpha}(x,t)=e^{qt}w_{\alpha}(x,t),$$
where $q>0$ is a constant.

According to the definition of $\bar{w}_{\alpha}(x,t)$ and \eqref{eq1}, $\bar{w}(x,t)$ satisfies
\begin{equation}
\left\{
\begin{array}{ll}
\partial_t\bar{w}_{\alpha}(x,t)+e^{qt}[(-\Delta-\lambda_{f})_{p}^{s}u_{\alpha}(x,t)-(-\Delta-\lambda_{f})_{p}^{s}u(x,t)]
\\ \quad=(c(x,t)+q)\bar{w}_{\alpha}(x,t),
&(x,t)\in\Omega\times[0,+\infty),\\
\bar{w}_{\alpha}(x,t)\geq0,
&(x,t)\in(\Sigma_{\alpha}\setminus\Omega)\times[0,+\infty).
\end{array}
\right.
\end{equation}

Now we show
\begin{equation}
\bar{w}_{\alpha}(x,t)\geq\min\{0,\underset{\Omega}\inf\bar{w}_{\alpha}(x,\underline{t})\},\ (x,t)\in\Omega\times[\underline{t},T]
\end{equation}
for any $[\underline{t},T]\subset[0,+\infty)$. Otherwise, there exists $(x_{0},t_{0})\in\Omega\times(\underline{t},T]$ such that
$$\bar{w}_{\alpha}(x_{0},t_{0})=\underset{\Sigma_{\alpha\times(\underline{t},T]}}\inf\bar{w}_{\alpha}(x,t)<\min\{0,\underset{\Omega}\inf\bar{w}_{\alpha}(x,\underline{t})\},$$
then
\begin{equation}
\partial_t\bar{w}_{\alpha}(x_{0},t_{0})\leq0,
\end{equation}
and
\begin{align}\begin{split}
&(-\Delta-\lambda_{f})_{p}^{s}u_{\alpha}(x_{0},t_{0})-(-\Delta-\lambda_{f})_{p}^{s} u(x_{0},t_{0})\\
\leq{}
  &C_{n,sp}\frac{\bar{w}_{\alpha}(x_{0},t_{0})}{e^{qt}}\textit{PV}\int_{\Sigma_{\alpha}}\frac{G'(\xi(y))+G'(\eta(y))}{e^{\lambda f(|x_{0}-y^{\alpha}|)}|x_{0}-y^{\alpha}|^{n+sp}}dy,
\end{split}
\end{align}
where $|x_{0}-y|\leq|x_{0}-y^{\alpha}|$ in $\Sigma_{\alpha}$ and we set
\[H=\{y=(y_{1},y')\in\Sigma_{\alpha}|\delta<y_{1}-(x_{0})_{1}<2\delta,\ |y'-(x_{0})'<\delta|\},\]
then
\begin{align}
\int_{\Sigma_{\alpha}}\frac{G'(\xi(y))+G'(\eta(y))}{e^{\lambda f(|x_{0}-y^{\alpha}|)}|x_{0}-y^{\alpha}|^{n+sp}}dy
\geq\frac{C}{\delta^{sp}},
\end{align}
\begin{align}
e^{qt}[(-\Delta-\lambda_{f})_{p}^{s}u_{\alpha}(x_{0},t_{0})-(-\Delta-\lambda_{f})_{p}^{s} u(x_{0},t_{0})]\leq\frac{C}{\delta^{sp}}\bar{w}_{\alpha}(x_{0},t_{0}).
\end{align}
As a result, we can obtain
\begin{align}\begin{split}\label{eq52}
0\geq{}&
\partial_t\bar{w}_{\alpha}(x_{0},t_{0})\\
={}
&-e^{qt}[(-\Delta-\lambda_{f})_{p}^{s}u_{\alpha}(x_{0},t_{0})-(-\Delta-\lambda_{f})_{p}^{s} u(x_{0},t_{0})]+(c_{\alpha}(x_{0},t_{0})+q)\bar{w}_{\alpha}(x_{0},t_{0})\\
\geq{}
&(-\frac{C}{\delta^{sp}}+c_{\alpha}(x_{0},t_{0})+q)\bar{w}_{\alpha}(x_{0},t_{0}).
\end{split}
\end{align}
Since $c_{\alpha}(x,t)$ is bounded from above for all $(x,t)$, we can choose small $\delta$ and $m$ in some proper sence such that
\[-\frac{C}{\delta^{sp}}+c_{\alpha}(x_{0},t_{0})+q<0,\]
which contradicts \eqref{eq52}.
Thus
\begin{align}
&\bar{w}_{\alpha}(x,t)\geq\min\{0,\underset{\Omega}\inf\bar{w}_{\alpha}(x,\underline{t})\}\notag \\
={}
&\min\{0, e^{q\underline{t}}\underset{\Omega}\inf w_{\alpha}(x,\underline{t})\}\notag \\
\geq{}
&-C.
\end{align}
Furthermore, we obtain
$$w_{\alpha}(x,t)\geq-Ce^{-q\underline{t}},\ \forall t>\underline{t}.$$
Let $\underline{t}\rightarrow+\infty$ and we have
\begin{equation}
\varliminf_{t\rightarrow\infty}w_{\alpha}(x,t)\geq0,\ x\in\Omega.
\end{equation}
\end{proof}

\begin{lem}
(Asymptotic strong maximum principle)

Assume that $u(x,t)\in(C_{loc}^{1,1}(B_{1}(0))\cap\mathcal{L}_{sp})\times C^{1}[0,\infty)$ is a positive solution to
\begin{equation}\label{eq13}
\partial_tu(x,t)+(-\Delta-\lambda_{f})_{p}^{s}u(x,t)=g(t,u(x,t)),
\end{equation}
where $0<s<1$, $p>2,\ n\geq2$.

Assume that
\begin{equation}\label{eq22}
g(t,0)=0,\ g\ \mbox{is Lispchitz continuous in u uniformly for t}.
\end{equation}
There is some $\varphi\in w(u)$ which is positive somewhere in $B_{1}(0)$,
then $$\varphi(x)>0\ \mbox{everywhere in}\ B_{1}(0)\ \mbox{for all}\ \varphi\in w(u).$$

In other words, the following strong maximum principle holds for the whole family of functions $\{\varphi|\varphi\in w(u)\}$ simultaneously:

Either
$$\varphi(x)\equiv0\ everywhere\ in\ B_{1}(0)\ for\ all\ \varphi\in w(u),$$
or
$$\varphi(x)>0\ everywhere\ in\ B_{1}(0)\ for\ all\ \varphi\in w(u).$$
\end{lem}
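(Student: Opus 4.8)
The plan is to linearise the reaction term, realise each element of $w(u)$ as the time‑zero slice of a bounded solution that is entire in $t$, prove a strong maximum principle for such solutions by testing the nonlocal operator at an interior minimum, and then promote the resulting slice‑by‑slice dichotomy to the whole family via connectedness of $w(u)$. \textbf{Step 1 (linearisation).} Since $u>0$, $g(t,0)=0$ and $g$ is Lipschitz in $u$, put $c(x,t):=g(t,u(x,t))/u(x,t)$, so $|c|\le L$ (the Lipschitz constant) and $u$ solves $\partial_tu+(-\Delta-\lambda_{f})_{p}^{s}u=c(x,t)u$ in $B_{1}(0)\times[0,\infty)$ with $u\equiv0$ on $B_{1}^{c}(0)$. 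Each $\varphi\in w(u)$ is a $C_{0}(B_{1}(0))$‑limit of positive functions, hence $\varphi\ge0$ in $B_{1}(0)$ and $\varphi=0$ on $\partial B_{1}(0)$.

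\textbf{Step 2 (entire extension).} Fix $\varphi=\lim_k u(\cdot,t_k)\in w(u)$ with $t_k\to\infty$. Using the regularity of $u$ together with precompactness of the orbit $\{u(\cdot,t):t\ge0\}$ in $C_{0}(B_{1}(0))$, the time translates $u_k(x,t):=u(x,t_k+t)$ converge, along a subsequence and locally uniformly on $\mathbb{R}^{n}\times\mathbb{R}$, to a bounded $\widetilde u\ge0$ with $\widetilde u(\cdot,0)=\varphi$ and $\widetilde u\equiv0$ on $B_{1}^{c}(0)\times\mathbb{R}$; moreover $\partial_t u_k\to\partial_t\widetilde u$ and $(-\Delta-\lambda_{f})_{p}^{s}u_k\to(-\Delta-\lambda_{f})_{p}^{s}\widetilde u$ pointwise, and every slice $\widetilde u(\cdot,s)$, $s\in\mathbb{R}$, again lies in $w(u)$.

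\textbf{Step 3 (strong maximum principle; the core).} Assume $\varphi\not\equiv0$, say $\varphi(\bar x)>0$ for some $\bar x\in B_{1}(0)$, and suppose $\varphi(x_0)=0$ for some $x_0\in B_{1}(0)$. Then $(x_0,0)$ is an interior minimum of $\widetilde u$, so $\widetilde u(x_0,\cdot)\ge0=\widetilde u(x_0,0)$ on both sides of $t=0$ forces $\partial_t\widetilde u(x_0,0)=0$; passing to the limit in the equation for $u_k$ at $(x_0,0)$ and using $u(x_0,t_k)\to\varphi(x_0)=0$ with $|c|\le L$ gives $\partial_t\widetilde u(x_0,0)+(-\Delta-\lambda_{f})_{p}^{s}\widetilde u(x_0,0)=0$, hence $(-\Delta-\lambda_{f})_{p}^{s}\widetilde u(x_0,0)=0$. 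On the other hand,
\[(-\Delta-\lambda_{f})_{p}^{s}\widetilde u(x_0,0)=C_{n,sp}\int_{\mathbb{R}^{n}}\frac{G(-\varphi(y))}{e^{\lambda f(|x_0-y|)}|x_0-y|^{n+sp}}\,dy<0,\]
since $G(-\varphi(y))=-\varphi(y)^{p-1}\le0$ and is strictly negative near $\bar x$, while near $x_0$ the integrand is genuinely integrable because $\varphi\in C^{1,1}_{loc}$, $\varphi(x_0)=\nabla\varphi(x_0)=0$ and $p>2>2/(2-s)$. This contradiction yields $\varphi>0$ everywhere in $B_{1}(0)$. Hence each $\varphi\in w(u)$ is either $\equiv0$ in $B_{1}(0)$ or $>0$ everywhere in $B_{1}(0)$; equivalently $w(u)=\mathcal P\sqcup\{0\}$ with $\mathcal P:=\{\varphi\in w(u):\varphi>0\text{ in }B_{1}(0)\}$, and $\mathcal P\neq\emptyset$ by hypothesis.

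\textbf{Step 4 (the whole family; main obstacle).} The orbit $\{u(\cdot,t):t\ge0\}$ is precompact and $t\mapsto u(\cdot,t)$ is continuous, so $w(u)$ is a nested intersection of compact connected sets, hence connected. The set $\{0\}$ is closed in $w(u)$; it remains to see it is also open, i.e. that no sequence in $\mathcal P$ converges in $C_{0}(B_{1}(0))$ to $0$. For this one uses a Hopf‑type lower bound $\varphi(x)\ge c\,d^{s}(x)$ near $\partial B_{1}(0)$ valid for all $\varphi\in\mathcal P$ with $c>0$ independent of $\varphi$ (the asymptotic boundary estimate stated in the Introduction, or a crude uniform‑in‑$t$ barrier for $u$ along the times $t_k$ in the spirit of Lemma \ref{narrow}), which keeps $\sup_{B_{1}(0)}\varphi$ bounded away from $0$ on $\mathcal P$. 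Then $\{0\}$ is clopen in the connected set $w(u)$ and $\mathcal P\neq\emptyset$, so $\{0\}\cap w(u)=\emptyset$; that is, $\varphi>0$ everywhere in $B_{1}(0)$ for every $\varphi\in w(u)$, which together with Step 3 is the claimed dichotomy. The delicate point is precisely this last step: upgrading the slice‑by‑slice positivity of Step 3 to an estimate that is uniform over the entire $w$‑limit set.
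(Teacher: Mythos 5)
Your Step 3 takes a genuinely different route from the paper. The paper does not argue by contradiction at an interior zero; it constructs an explicit subsolution $\underline{u}=\chi_{D}(x)u_{\infty}(x,t)+\delta\eta(t)\Phi(x)$ with the barrier $\Phi(x)=(1-|x|^{2})_{+}^{s}$, shows $(-\Delta-\lambda_{f})_{p}^{s}\Phi$ is uniformly bounded (Corollary \ref{corr1}), verifies $\partial_t\underline{u}+(-\Delta-\lambda_{f})_{p}^{s}\underline{u}\leq 0$ for small $\delta$, and invokes the maximum principle of Lemma \ref{lem2} to get $u_{\infty}\geq\delta\eta(t)(1-|x|^{2})^{s}$. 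Your touching-point computation — evaluating the operator at a putative interior zero, where $G(-\varphi(y))\leq 0$ with strict inequality near $\bar x$ and the singularity at $x_{0}$ is integrable because $p(2-s)>2$ — is correct and more elementary, and it needs neither the barrier nor Corollary \ref{corr1}. But note what it costs: the paper's proof of Theorem \ref{corr2} opens with ``Through the accurate calculation of the above lemma, we get $u_{\infty}(x,t)\geq\delta\eta(t)(1-|x|^{2})^{s}$,'' i.e.\ the Hopf estimate $\varphi\geq c\,d^{s}$ is extracted verbatim from the subsolution construction inside this lemma's proof. Your qualitative contradiction argument yields positivity but no rate, so the downstream Hopf lemma would have to be reproved from scratch.

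The genuine gap is in Step 4. To show $\{0\}$ is open in $w(u)$ you invoke ``the asymptotic boundary estimate stated in the Introduction'' with a constant $c>0$ independent of $\varphi$. That estimate is Theorem \ref{corr2}, which the paper derives \emph{from} the present lemma's proof; using it here is circular. And the uniformity of $c$ over all of $\mathcal{P}$ is precisely the assertion that no sequence in $\mathcal{P}$ can converge to $0$ — you cannot assume it to prove it. The parenthetical alternative (``a crude uniform-in-$t$ barrier \ldots in the spirit of Lemma \ref{narrow}'') is not carried out, and Lemma \ref{narrow} concerns antisymmetric differences $w_{\alpha}$ in a narrow slab, not a lower bound for $u$ itself, so it does not obviously supply one. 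As written, your argument establishes the slice-by-slice dichotomy but not the simultaneous alternative over the whole family. (To be fair, the paper's own proof also elides this point by silently identifying the arbitrary $\varphi$ with the one assumed positive somewhere; but since you explicitly flag the uniformity as the crux and then justify it circularly, the step must be counted as unproved. A non-circular route would be to run your Step 3 barrier-free argument on the entire limit solution $\widetilde u$ at a hypothetical time $s_{*}$ where a slice first degenerates, or to prove a ``once positive on $D$, positive for all later times'' estimate directly from the equation along the orbit.)
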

\begin{proof}
For any $\varphi(x)\in w(u)$, there exists $t_{k}$ such that $u(x,t_{k})\rightarrow \varphi(x)$ as $t_{k}\rightarrow\infty$. Set
$$u_{k}(x,t)=u(x,t+t_{k}-1),$$
$$g_{k}(t,u)=g(t+t_{k}-1,u).$$

Then $u_{k}(x,1)\rightarrow\varphi(x)$ as $k\rightarrow\infty$. From regularity theory for parabolic equations, we conclude that there exist some functions $u_{\infty}$ and $\tilde{g}$ such that $u_{k}\rightarrow u_{\infty},\ g_{k}\rightarrow \tilde{g}$ and $u_{\infty}(x,t)$ satisfies
\begin{equation}
\partial_tu_{\infty}+(-\Delta-\lambda_{f})_{p}^{s}u_{\infty}
=\tilde{g}_{u}u_{\infty},\ (x,t)\in B_{1}(0)\times[0,2].
\end{equation}

Since there is some $\varphi\in w(u)$ satisfies $\varphi>0$ somewhere in $B_{1}(0)$, by the continuity, there exists a set $D\subset\subset B_{1}(0)$ such that
\begin{equation}\label{eq23}
\varphi(x)\geq c\geq0,\ x\in D,
\end{equation}
where $c$ is a positive small constant. This means that
$$u_{\infty}(x,1)\geq c>0,\ x\in D.$$

From the continuity of $u_{\infty}(x,t)$, there exists $0<\varepsilon_{0}<1$, such that $$u_{\infty}(x,t)\geq\frac{c}{2}>0,\ (x,t)\in D\times[1-\varepsilon_{0},1+\varepsilon_{0}].$$
Define $\Phi(x)=(1-|x|^{2})_{+}^{s}$, where
\begin{equation*}
\Phi(x)=\begin{cases}
(1-|x|^{2})^{s},&|x|<1,\\
0,&|x|\geq1,
\end{cases}
\end{equation*}
and $\Phi(x)\in C_{0}^{\infty}(B_{\varepsilon}(\bar{x}))$, $\bar{x}\in B_{1}(0)\setminus D$.

There exists $\varepsilon>0$ such that $B_{\varepsilon}(\bar{x})\subset B_{1}(0)\setminus D$. For any $t\in\mathbb{R}$, we structure a subsolution
$$\underline{u}=\chi_{D}(x)u_{\infty}(x,t)+\delta\eta(t)\Phi(x),$$
where $\delta$ is a positive constant,
\begin{equation*}
\chi_{D}(x)= \begin{cases}
1,&x\in D,\\
0,&x\notin D,
\end{cases}
\end{equation*}
and $\eta(t)\in C_{0}^{\infty}([1-\varepsilon_{0},1+\varepsilon_{0}])$ satisfies
\begin{equation*}
\eta(t)= \begin{cases}
1,&t\in[1-\frac{\varepsilon_{0}}{2},1+\frac{\varepsilon_{0}}{2}],\\
0,&x\notin(1-\varepsilon_{0},1+\varepsilon_{0}).
\end{cases}
\end{equation*}

Since $\Phi(x)\in C_{0}^{\infty}(B_{\varepsilon}(\bar{x}))$, $x\in B_{\varepsilon}(\bar{x})$.
And then we will use the following corrllary.
\begin{corr}\label{corr1}
Let $0<s<1$, $p>2,\ n\geq2$, $\Phi(x)=(1-|x|^{2})_{+}^{s}$, then $(-\Delta-\lambda_{f})_{p}^{s}\Phi(x)$ is uniformly bounded in the unit ball $B_{1}(0)\subset\mathbb{R}^{n}$.
\end{corr}
\begin{proof}
For some fixed $\delta\in(0,1)$, when $|x|<1-\delta$, by \cite{chen2018maximum}, It is straightforward to check that
\begin{align*}
&|(-\Delta-\lambda_{f})_{p}^{s}\Phi(x)|\notag\\
\leq&C\int_{\mathbb{R}^{n}\setminus B_{\delta}(x)}\frac{|\Phi(x)-\Phi(y)|^{p-2}(\Phi(x)-\Phi(y))}{|x-y|^{n+sp}}dy\notag\\
&+C|\nabla \Phi(x)|^{p-2}\int_{B_{\delta}(x)}\frac{|x-y|^{p}}{e^{\lambda f(|x-y|)}|x-y|^{n+sp}}dy\notag\\
\leq&C.
\end{align*}
That is, $(-\Delta-\lambda_{f})_{p}^{s}\Phi(x)$ is bounded for $|x|<1-\delta$. Therefore in the following we only need to consider the case when $|x|$ is close to 1, since $x$ approaching $-1$ is a similar calculation.

Firstly we give a general estimate for $(-\Delta-\lambda_{f})_{p}^{s}\Phi(x)$ when $x$ is close to 1. Without loss of generality, we assume a fixed $x:=(x,0,\cdots,0)\in B_{1}$ is close to $(1,0,\cdots,0)$, and $y:=(y_{1},y_{2},\cdots,y_{n})=(y_{1},\bar{y})$. We omit the constant $C_{n,sp}$ for simplicity. Then
\begin{align*}
&(-\Delta-\lambda_{f})_{p}^{s}\Phi(x)\\
=&\underset{\epsilon\rightarrow0}\lim\int_{\mathbb{R}^{n}\setminus B_{\delta}(x)}\frac{|\Phi(x)-\Phi(y)|^{p-2}(\Phi(x)-\Phi(y))}{e^{\lambda f(|x-y|)}|x-y|^{n+sp}}dy\\
=&\underset{\epsilon\rightarrow0}\lim\int_{y\in\mathbb{R}^{n}:|x-y|\geq\epsilon}
\frac{|(1-x^{2})^{s}-(1-y^{2})_{+}^{s}|^{p-2}[(1-x^{2})^{s}-(1-y^{2})_{+}^{s}]}{e^{\lambda f(|x-y|)}|x-y|^{n+sp}}dy\\
=&\underset{\epsilon\rightarrow0}\lim\int_{y\in\mathbb{R}^{n}:|x-y|\geq\epsilon}\frac{|(1-x^{2})^{s}-(1-y_{1}^{2}-|\bar{y}|^{2})_{+}^{s}|^{p-2}[(1-x^{2})^{s}-(1-y_{1}^{2}-|\bar{y}|^{2})_{+}^{s}]}{e^{\lambda f(|(x-y_{1})^{2}+|\bar{y}|^{2}|^{\frac{1}{2}})}|(x-y_{1})^{2}+|\bar{y}|^{2}|^{\frac{n+sp}{2}}}dy\\
=&\underset{\epsilon\rightarrow0}\lim\int_{y\in\mathbb{R}^{n}:|x-y|\geq\epsilon}\frac{G[(1-x^{2})^{s}-(1-y_{1}^{2}-|\bar{y}|^{2})_{+}^{s}]}{e^{\lambda f(|(x-y_{1})^{2}+|\bar{y}|^{2}|^{\frac{1}{2}})}|(x-y_{1})^{2}+|\bar{y}|^{2}|^{\frac{n+sp}{2}}}dy,
\end{align*}
where $G[t]=|t|^{p-2}t$. Set $z=(z_{1},\bar{z})$, where $z_{1}=x-y_{1},\ \bar{z}=\bar{y}$. Then
\begin{align*}
&(-\Delta-\lambda_{f})_{p}^{s}\Phi(x)\\
=&\underset{\epsilon\rightarrow0}\lim\int_{z\in\mathbb{R}^{n}:|z|\geq\epsilon}\frac{G[(1-x^{2})^{s}-(1-(x-z_{1})^{2}-|\bar{z}|^{2})_{+}^{s}]}{e^{\lambda f(|(z_{1})^{2}+|\bar{z}|^{2}|^{\frac{1}{2}})}|(z_{1})^{2}+|\bar{z}|^{2}|^{\frac{n+sp}{2}}}dz\\
=&(1-x^{2})^{s(p-1)}\underset{\epsilon\rightarrow0}\lim\int_{z\in\mathbb{R}^{n}:|z|\geq\epsilon}
\frac{G[1-(1+\frac{2xz_{1}}{1-x^{2}}-\frac{|z|^{2}}{1-x^{2}})_{+}^{s}]}{e^{\lambda f(|z|)}|z|^{n+sp}}dz.
\end{align*}
Let $w=\frac{2xz}{1-x^{2}}$, where $w_{1}=\frac{2xz_{1}}{1-x^{2}}$ and $\bar{w}\frac{2x\bar{z}}{1-x^{2}}$, then
\begin{align*}
&(-\Delta-\lambda_{f})_{p}^{s}\Phi(x)\\
=&\frac{(2x)^{sp}}{(1-x^{2})}\underset{\epsilon\rightarrow0}\lim\int_{w\in\mathbb{R}^{n}:|w|\geq\frac{2x\epsilon}{1-x^{2}}}
\frac{G[1-(1+w_{1}-\frac{1-x^{2}}{4x^{2}}|w|^{2})_{+}^{s}]}{e^{\lambda f(|\frac{(1-x^{2})w}{2x}|)}|w|^{n+sp}}dw.
\end{align*}
Now we roughly analyze the integration term.
\begin{align*}
&\int_{|w|\geq\frac{2x\epsilon}{1-x^{2}}}
\frac{G[1-(1+w_{1}-\frac{1-x^{2}}{4x^{2}}|w|^{2})_{+}^{s}]}{e^{\lambda f(|\frac{(1-x^{2})w}{2x}|)}|w|^{n+sp}}dw\\
=&\int_{-\infty}^{+\infty}dw_{1}\int_{|w|^{2}\geq\frac{4x^{2}\epsilon^{2}}{(1-x^{2})^{2}}-w_{1}^{2}}
\frac{G[1-(1+w_{1}-\frac{1-x^{2}}{4x^{2}}w_{1}^{2}-\frac{1-x^{2}}{4x^{2}}|\bar{w}|^{2})_{+}^{s}]}{e^{\lambda f(|\frac{(1-x^{2})^{2}}{4x^{2}}w_{1}^{2}+\frac{(1-x^{2})^{2}}{4x^{2}}|\bar{w}|^{2}|^{\frac{1}{2}})}|w_{1}^{2}+|\bar{w}|^{2}|^{\frac{n+sp}{2}}}dw\\
=&C\int_{-\infty}^{+\infty}dw_{1}\int_{\rho^{2}\geq\frac{4x^{2}\epsilon^{2}}{(1-x^{2})^{2}}-w_{1}^{2}}
\frac{G[1-(1+w_{1}-\frac{1-x^{2}}{4x^{2}}w_{1}^{2}-\frac{1-x^{2}}{4x^{2}}\rho^{2})_{+}^{s}]}{e^{\lambda f(|\frac{(1-x^{2})^{2}}{4x^{2}}w_{1}^{2}+\frac{(1-x^{2})^{2}}{4x^{2}}\rho^{2}|^{\frac{1}{2}})}|w_{1}^{2}+\rho^{2}|^{\frac{n+sp}{2}}}\rho^{n-2}d\rho\\
=&C\int_{-\frac{2x\epsilon}{1-x^{2}}}^{\frac{2x\epsilon}{1-x^{2}}}dw_{1}\int_{\sqrt{\frac{4x^{2}\epsilon^{2}}{(1-x^{2})^{2}}-w_{1}^{2}}}^{\infty}
\frac{G[1-(1+w_{1}-\frac{1-x^{2}}{4x^{2}}w_{1}^{2}-\frac{1-x^{2}}{4x^{2}}\rho^{2})_{+}^{s}]}{e^{\lambda f(|\frac{(1-x^{2})^{2}}{4x^{2}}w_{1}^{2}+\frac{(1-x^{2})^{2}}{4x^{2}}\rho^{2}|^{\frac{1}{2}})}|w_{1}^{2}+\rho^{2}|^{\frac{n+sp}{2}}}\rho^{n-2}d\rho\\
&+C\int_{|w_{1}|\geq\frac{2x\epsilon}{1-x^{2}}}dw_{1}\int_{0}^{\infty}
\frac{G[1-(1+w_{1}-\frac{1-x^{2}}{4x^{2}}w_{1}^{2}-\frac{1-x^{2}}{4x^{2}}\rho^{2})_{+}^{s}]}{e^{\lambda f(|\frac{(1-x^{2})^{2}}{4x^{2}}w_{1}^{2}+\frac{(1-x^{2})^{2}}{4x^{2}}\rho^{2}|^{\frac{1}{2}})}|w_{1}^{2}+\rho^{2}|^{\frac{n+sp}{2}}}\rho^{n-2}d\rho\\
:=&CJ_{1}+CJ_{2}.
\end{align*}
When $x\rightarrow1$, $w_{1}\in[-\frac{2x\epsilon}{1-x^{2}},\frac{2x\epsilon}{1-x^{2}}],\ \rho^{2}\geq\sqrt{\frac{4x^{2}\epsilon^{2}}{(1-x^{2})^{2}}-w_{1}^{2}}$,
$$\frac{1}{e^{\lambda f(|\frac{(1-x^{2})^{2}}{4x^{2}}w_{1}^{2}+\frac{(1-x^{2})^{2}}{4x^{2}}\rho^{2}|^{\frac{1}{2}})}}\leq C.$$
By \cite{li2021zhangsub}, we already know that
\begin{align*} \int_{-\frac{2x\epsilon}{1-x^{2}}}^{\frac{2x\epsilon}{1-x^{2}}}dw_{1}\int_{\sqrt{\frac{4x^{2}\epsilon^{2}}{(1-x^{2})^{2}}-w_{1}^{2}}}^{\infty}
\frac{G[1-(1+w_{1}-\frac{1-x^{2}}{4x^{2}}w_{1}^{2}-\frac{1-x^{2}}{4x^{2}}\rho^{2})_{+}^{s}]}{|w_{1}^{2}+\rho^{2}|^{\frac{n+sp}{2}}}\rho^{n-2}d\rho
&\leq C.
\end{align*}
So $J_{1}<C$. Similarly, we can get $J_{2}<C$. Hence we have completed the proof.
\end{proof}
According to the definition of the operator $(-\Delta-\lambda_{f})_{p}^{s}$ and the assumption, for each fixed $t\in[1-\varepsilon_{0},1+\varepsilon_{0}]$ and for any $x\in B_{\varepsilon}(\bar{x})$, it holds
\begin{align}\label{eq17}
&(-\Delta-\lambda_{f})_{p}^{s}(\chi_{D}(x)u_{\infty}(x,t))\notag\\
 ={}
 &C_{n,sp}\textit{PV}\int_{\mathbb{R}^{n}}\frac{G(\chi_{D}(x)u_{\infty}(x,t)-\chi_{D}(y)u_{\infty}(y,t))}{e^{\lambda f(|x-y|)}|x-y|^{n+sp}}dy\notag\\
 ={}
 &C_{n,sp}\textit{PV}\int_{\mathbb{R}^{n}}\frac{G(-\chi_{D}(y)u_{\infty}(y,t))}{e^{\lambda f(|x-y|)}|x-y|^{n+sp}}dy\notag\\
 ={}
 &C_{n,sp}\textit{PV}\int_{D}\frac{G(-u_{\infty}(y,t))}{e^{\lambda f(|x-y|)}|x-y|^{n+sp}}dy\notag\\
 \leq{}
 &-C,
\end{align}
for each fixed $t\in[1-\varepsilon_{0},1+\varepsilon_{0}]$ and for any $x\in B_{\varepsilon}(\bar{x})$.

For any $(x,t)\in B_{\varepsilon}(\bar{x})\times[1-\varepsilon_{0},1+\varepsilon_{0}]$, it follows from corollary \ref{corr1} that
\begin{align}
&(-\Delta-\lambda_{f})_{p}^{s}\underline{u}(x,t)\notag\\
=&(-\Delta-\lambda_{f})_{p}^{s}(\chi_{D}(x)u_{\infty}(x,t)+\delta\eta(t)\Phi(x))\notag\\
=&(-\Delta-\lambda_{f})_{p}^{s}(\chi_{D}(x)u_{\infty}(x,t))+\delta\eta(t)(-\Delta-\lambda_{f})_{p}^{s}\Phi(x)\notag\\
\leq&-C+C\delta\eta(t).
\end{align}
By a simple calculation, we obtain that for any $(x,t)\in B_{\varepsilon}(\bar{x})\times[1-\varepsilon_{0},1+\varepsilon_{0}]$
\begin{align}\label{eq21}
&\partial_t\underline{u}(x,t)+(-\Delta-\lambda_{f})_{p}^{s}\underline{u}(x,t)\notag\\
=&\delta\eta'(t)\Phi(x)+(-\Delta-\lambda_{f})_{p}^{s}[\chi_{D}(x)u_{\infty}(x,t)+\delta\eta(t)\Phi(x)]\notag\\
\leq&-C+\delta[C\eta(t)+\eta'(t)\Phi(x)].
\end{align}
For sufficiently small $\delta$, it holds
\begin{equation}
\partial_t\underline{u}(x,t)+(-\Delta-\lambda_{f})_{p}^{s}\underline{u}(x,t)\leq0,\ (x,t)\in B_{\varepsilon}(\bar{x})\times[1-\varepsilon_{0},1+\varepsilon_{0}].
\end{equation}

In all cases, for $(x,t)\in B_{\varepsilon}^{c}(\bar{x})\times[1-\varepsilon_{0},1+\varepsilon_{0}]$, we have
$$u_{\infty}(x,t)\geq\chi_{D}(x)u_{\infty}(x,t)=\underline{u}(x,t).$$

Denote
$$w(x,t)=u_{\infty}(x,t)-\underline{u}(x,t).$$
Combining \eqref{eq13} and \eqref{eq21}, for $(x,t)\in B_{\varepsilon}(\bar{x})\times[1-\varepsilon_{0},1+\varepsilon_{0}]$, we have
\begin{align}
&\partial_t w(x,t)+(-\Delta-\lambda_{f})_{p}^{s}u_{\infty}(x,t)-(-\Delta-\lambda_{f})_{p}^{s}\underline{u}(x,t)\notag\\
\geq&g(t_{\infty},u_{\infty})+C_{2}-\delta[C\eta(t)+\eta'(t)\Phi(x)]\notag\\
=&C_{3}(x,t)u_{\infty}(x,t)+C_{2}-\delta[C\eta(t)+\eta'(t)\Phi(x)],
\end{align}
where $p\geq2,\ C_{3}(x,t)=\frac{g(t,u_{\infty})-g(t,0)}{u_{\infty}(x,t)-0}=\frac{g(t,u_{\infty})}{u_{\infty}(x,t)}$ is bounded by \eqref{eq22}. Taking $\delta$ sufficiently small, we obtain
$$\partial_t w(x,t)+(-\Delta-\lambda_{f})_{p}^{s}u_{\infty}(x,t)-(-\Delta-\lambda_{f})_{p}^{s}\underline{u}(x,t)\geq0,$$
$$(x,t)\in B_{\varepsilon}(\bar{x})\times[1-\varepsilon_{0},1+\varepsilon_{0}].$$
Hence
\begin{equation}
\left\{
\begin{array}{ll}
\partial_t w(x,t)+(-\Delta-\lambda_{f})_{p}^{s}u_{\infty}(x,t)-(-\Delta-\lambda_{f})_{p}^{s}\underline{u}(x,t)
\\ \quad\geq0,
&(x,t)\in B_{\varepsilon}(\bar{x})\times[1-\varepsilon_{0},1+\varepsilon_{0}],\\
w(x,t)\geq0,
&(x,t)\in B_{\varepsilon}^{c}(\bar{x})\times[1-\varepsilon_{0},1+\varepsilon_{0}],\\
w(x,0)\geq0,&x\in B_{\varepsilon}(\bar{x}).
\end{array}
\right.
\end{equation}
Applying lemma \ref{lem2} to $w(x,t)$, it yields
$$w(x,t)\geq0,\ (x,t)\in B_{\varepsilon}(\bar{x})\times[1-\varepsilon_{0},1+\varepsilon_{0}].$$
It follows from the definition of $w(x,t)$ that
$$u_{\infty}(x,t)\geq\delta\eta(t)\Phi(x),\ (x,t)\in B_{\varepsilon}(\bar{x})\times[1-\varepsilon_{0},1+\varepsilon_{0}],$$
and
$$u_{\infty}(x,t)\geq\delta\eta(t)(1-|x|^{2})^{s},\ x\in B_{\varepsilon}(\bar{x}).$$
In particular, taking $t=1$, we obtain
$$u_{\infty}(x,1)\geq\delta (1-|x|^{2})^{s},\ x\in B_{\varepsilon}(\bar{x}),$$
and
\begin{equation}\label{eq24}
\varphi(\bar{x})>0.
\end{equation}
By the arbitrariness of $\bar{x}\in B_{1}(0)\setminus D$, combining \eqref{eq23} and \eqref{eq24} we obtain
$$\varphi(x)>0,\ x\in B_{1}(0).$$
\end{proof}
\section{Proof of the Main Result}
\begin{thm}\label{corr2}
(Asymptotic Hopf's lemma for a generalized tempered fractional p-Laplacian) Assume that $u(x,t)\in(C_{loc}^{1,1}(B_{1}(0))\cap\mathcal{L}_{sp})\times C^{1}(0,\infty)$ is a positive solution to
\begin{equation}
\partial_t u(x,t)+(-\Delta-\lambda_{f})_{p}^{s}u(x,t)=g(t,u(x,t)),
\end{equation}
where $0<s<1$, $p>2,\ n\geq2$.

Assume that
\begin{equation}
g(t,0)=0,\ g\ \mbox{is Lispchitz continuous in u uniformly fo t}.
\end{equation}
Then there exists a positive constant $c$, such that for any $t\rightarrow\infty$ and for all $x$ near the boundary of $B_{1}(0)$, we have
\[\varphi(x)\geq c d^{s}(x),\]
where $d(x)=dist(x,\partial B_{1}(0))$. It follows that
\[\frac{\partial \varphi}{\partial\nu^{s}}(x_{0})<0,\]
where $\nu$ is the outward normal of $\partial B_{1}(0)$ at $x_{0}$ and $\frac{\partial \varphi}{\partial\nu^{s}}$ is the normal derivative of fractional order $s$.
\end{thm}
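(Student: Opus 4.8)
The plan is to transplant to a full boundary layer of $B_1(0)$ the barrier construction used locally in the asymptotic strong maximum principle just proved, and to read off the rate $d^s$ directly from the explicit barrier $\Phi(x)=(1-|x|^2)_+^s$ together with Corollary \ref{corr1}. First I would reduce to a single limit profile enjoying uniform interior positivity. Since $u>0$, the asymptotic strong maximum principle gives $\varphi>0$ throughout $B_1(0)$ for every $\varphi\in w(u)$; as $w(u)$ is compact in $C_0(B_1(0))$ and $\psi\mapsto\min_{\overline{B_{1-r_0}}}\psi$ is continuous, there will be $r_0\in(0,1)$ and $c_0>0$ with $\varphi\ge c_0$ on $\overline{B_{1-r_0}}$ for all $\varphi\in w(u)$. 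Fixing $\varphi\in w(u)$ and $t_k\to\infty$ with $u(\cdot,t_k)\to\varphi$, I would set $u_k(x,t)=u(x,t+t_k-1)$ and pass to the limit exactly as in the preceding proof, obtaining $u_\infty$ with $u_\infty(\cdot,1)=\varphi$ solving $\partial_t u_\infty+(-\Delta-\lambda_{f})_{p}^{s}u_\infty=\tilde c(x,t)u_\infty$ on $B_1(0)\times[0,2]$ with $\tilde c$ bounded (here $g(t,0)=0$ and the Lipschitz hypothesis enter). Continuity then gives $\varepsilon_0\in(0,1)$ with $u_\infty\ge c_0/2$ on $\overline{B_{1-r_0}}\times[1-\varepsilon_0,1+\varepsilon_0]$, and, since $u_\infty$ is uniformly continuous on $\overline{B_1(0)}\times[1-\varepsilon_0,1+\varepsilon_0]$ and vanishes on $\partial B_1(0)$, we get $|u_\infty|\le\omega(r_0)$ on the boundary layer $\Omega:=\{1-r_0<|x|<1\}$ during that window, with $\omega(r_0)\to0$ as $r_0\to0$.

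Next I would build a subsolution adapted to the whole layer. I would take $\Phi(x)=(1-|x|^2)_+^s$, multiplied if necessary by a fixed smooth radial cutoff equal to $1$ for $|x|\ge1-r_0/2$ and to $0$ for $|x|\le1-r_0$; this leaves the $d^s$ behaviour at $\partial B_1(0)$ unchanged and, by Corollary \ref{corr1} together with a bounded interior correction, keeps $(-\Delta-\lambda_{f})_{p}^{s}\Phi$ bounded on $B_1(0)$. With $D=B_{1-r_0}$ and a temporal cutoff $\eta\in C_0^\infty((1-\varepsilon_0,1+\varepsilon_0))$, $\eta\equiv1$ near $t=1$, I would consider $\underline{u}(x,t)=\chi_D(x)u_\infty(x,t)+\delta\eta(t)\Phi(x)$. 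On $\Omega$ one has $\chi_D\equiv0$ and $\Phi|_D\equiv0$, so for $(x,t)\in\Omega\times[1-\varepsilon_0,1+\varepsilon_0]$ the integral defining $(-\Delta-\lambda_{f})_{p}^{s}\underline{u}(x,t)$ splits into the piece over $D$, whose integrand $G(\delta\eta\Phi(x)-u_\infty(y,t))\le G(-c_0/4)<0$ for $\delta$ small produces a strictly negative, $\delta$-independent term $\le-C(r_0)$ with $C(r_0)\gtrsim c_0^{p-1}r_0^{-sp}$ (as in \eqref{eq17}), and the piece over $\mathbb{R}^n\setminus D$, which equals $(\delta\eta)^{p-1}$ times the analogous integral for $\Phi$ and is thus $O(\delta^{p-1})$ by Corollary \ref{corr1}; since also $\partial_t\underline{u}=\delta\eta'\Phi=O(\delta)$, first choosing $r_0$ small and then $\delta$ small makes $\underline{u}$ a subsolution of the parabolic operator on $\Omega\times[1-\varepsilon_0,1+\varepsilon_0]$.

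Then I would compare. With $w=u_\infty-\underline{u}$, one checks $w\ge0$ on $\Omega^c$ (on $D$ because $\Phi|_D\equiv0$, outside $B_1(0)$ because $u\equiv0$, on the remaining compact shell because $u_\infty\ge c_0/2$ while $\delta\eta\Phi$ is small) and at the initial time, and on $\Omega$, subtracting the equation for $u_\infty$ and writing $g(t,u_\infty)=\tilde c u_\infty$ with $|\tilde c u_\infty|\le L\omega(r_0)$, the quantity $\partial_t w+(-\Delta-\lambda_{f})_{p}^{s}u_\infty-(-\Delta-\lambda_{f})_{p}^{s}\underline{u}$ is $\ge\tfrac12 C(r_0)-L\omega(r_0)\ge0$. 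Lemma \ref{lem2}, in the inequality form of the Remark following it, applied on the bounded region $\Omega$, then gives $w\ge0$ there; taking $t=1$ and using $1-|x|^2\ge d(x)$ in $B_1(0)$ yields $\varphi(x)=u_\infty(x,1)\ge\delta(1-|x|^2)^s\ge\delta d^s(x)$ for $x\in\Omega$, with $\delta$ independent of $\varphi\in w(u)$ by the uniformity of $c_0,r_0,\varepsilon_0$. Finally, since $\varphi(x_0)=0$ for $x_0\in\partial B_1(0)$, this forces $\frac{\partial\varphi}{\partial\nu^s}(x_0)=-\lim_{\tau\to0^+}\tau^{-s}\varphi(x_0-\tau\nu)\le-\delta<0$.

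The main obstacle is the sign bookkeeping in the barrier step: I must guarantee that the strictly negative, $\delta$-free contribution coming from the interior mass of $\underline{u}$ on $D$ dominates, uniformly up to $\partial B_1(0)$, both the small reaction term $g(t,u_\infty)$ and the bounded nonlocal cost of the barrier $\Phi$. This dictates the order of the choices — shrinking the layer width $r_0$ first, so that the $D$-integral blows up like $r_0^{-sp}$ while $u_\infty$ (hence $g(t,u_\infty)$) stays $O(\omega(r_0))$ on $\Omega$, and only then taking $\delta$ small — and it is precisely here that the interior positivity of $\varphi$ is converted into the sharp boundary rate $d^s$, crucially relying on the boundedness of $(-\Delta-\lambda_{f})_{p}^{s}\Phi$ from Corollary \ref{corr1}. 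Propagating the uniformity of $\delta$ over the compact family $w(u)$ through the translation–limit procedure is the remaining routine point.
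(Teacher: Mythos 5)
Your proposal is correct and follows essentially the same route as the paper: the theorem is obtained by re-running the barrier construction of the asymptotic strong maximum principle (the subsolution $\chi_D u_\infty+\delta\eta(t)\Phi$ with $\Phi=(1-|x|^2)_+^s$, Corollary \ref{corr1}, and the comparison via Lemma \ref{lem2}) near $\partial B_1(0)$ and reading off $\varphi\ge\delta(1-|x|^2)^s\ge\delta\,d^s(x)$. Your version is more careful than the paper's (which merely cites "the accurate calculation of the above lemma"): working on a full annular layer rather than small balls $B_\varepsilon(\bar x)$, fixing the order of the choices of $r_0$ and $\delta$, and securing uniformity of the constant over the compact family $w(u)$ are genuine improvements in rigor, but not a different method.
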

\begin{proof}
Through the accurate calculation of the above lemma, we get
$$u_{\infty}(x,t)\geq\delta\eta(t)(1-|x|^{2})^{s},\ x\in B_{\varepsilon}(\bar{x}).$$

In particular, fixed $\delta$, for $x\in B_{\varepsilon}(\bar{x})$, one has
\[u_{\infty}(x,1)\geq\delta((1-|x|)(1+|x|))^{s}=cd^{s}(x),\]
where $d(x)=dist(x,\partial B_{1}(0))=1-|x|,\ c>0.$ Consequently,
\[\underset{x\rightarrow\partial B_{1}(0)}\lim\frac{u_{\infty}(x,1)}{d^{s}(x)}\geq c>0.\]

Hence if $\nu$ is the outward normal of $\partial\Omega$ at $x^{0}\in\partial B_{1}(0)$, then
\[\frac{\partial u_{\infty}(x^{0},1)}{\partial\nu^{s}}<0,\]
due to $u(x^{0},1)=0,\ x^{0}\in\partial B_{1}(0)$. It follows that
\[\frac{\partial\varphi}{\partial\nu^{s}}(x^{0})<0,\ \forall x^{0}\in\partial B_{1}(0).\]
\end{proof}

Finally, we briefly explain how to apply Hopf's lemma for parabolic equations involving a generalized tempered fractional $p$-Laplacian in the first step of the method of moving planes, take $\Omega=B_{1}(0)$ as an example. To obtain the radial symmetry of positive solutions to
\begin{equation}
\left\{
\begin{array}{ll}
\partial_tu(x,t)+(-\Delta-\lambda_{f})_{p}^{s}u(x,t)=g(t,u(x,t)),
&(x,t)\in B_{1}(0)\times[0,+\infty),\\
u(x)=0,&(x,t)\in B_{1}^{c}(0)\times[0,+\infty),
\end{array}
\right.
\end{equation}
where $0<s<1$, $p>2,\ n\geq2$.

Let $\Omega_{\alpha}=\{x\in\Omega|x_{1}<\alpha\}$. In this step, we show that, for $\alpha>-1$ and sufficiently closed to $-1$, it holds
\begin{equation}\label{eq28}
\psi_{\alpha}\geq0,\ \forall x\in\Omega_{\alpha},\ \mbox{for all}\ \varphi(x)\in w(u).
\end{equation}
By applying the theorem \ref{corr2}, we have
\begin{equation}
\frac{\partial\varphi}{\partial x_{1}^{s}}(x^{0})>0,\ \forall x^{0}\in\partial B_{1}(0).
\end{equation}
As a consequence,
\[\frac{\partial\varphi}{\partial x_{1}}(x^{0})>0,\ \forall x^{0}\in\partial B_{1}(0).\]
Then by the continuity of $\frac{\partial\varphi}{\partial x_{1}^{s}}$ in some proper sence, it is natural to expect that
\begin{equation}
\frac{\partial\varphi}{\partial x_{1}}(x)>0,\ \mbox{for}\ x\in\Omega\ \mbox{sufficiently close to $x^{0}$,}
\end{equation}
which implies \eqref{eq28} immediately.


\bibliographystyle{unsrt}

\end{sloppypar}
\end{document}